\newcommand{\comentario}[1]{}
\theoremstyle{definition} \newtheorem{defn}{Definition}[section]
\theoremstyle{definition} \newtheorem{remark}[defn]{Remark}
\theoremstyle{definition} \newtheorem{eje}[defn]{Example}
\theoremstyle{theorem} 
 \newtheorem{lem}[defn]{Lemma}
 \newtheorem{thrm}[defn]{Theorem}
  \newtheorem{con}[defn]{Consequence}
 \newtheorem{claim}[defn]{Claim}
\theoremstyle{theorem} 
\theoremstyle{theorem} \newtheorem{prop}[defn]{Proposition}
\newenvironment{proof}{    % De proof o de prueba seg\'{u}n convenga
  \noindent
  \textbf{Proof.}}{
  \hfill $\Box$
  \vspace{3mm}
}
\newcommand{\R}{\mathbb{R}}
\newcommand{\N}{\mathbb{N}}
\newcommand{\C}{\mathbb{C}}
\newcommand{\K}{\mathbb{K}}
\newcommand{\T}{\mathfrak{T}}
\newcommand{\om}{\Omega}
\def\n{|\!|\!|}
\begin{document}

\title{\textbf{Seminar about the Bounded Approximation Property in Fr\'echet Spaces
%\\[3pt]
%Seminar delivered at the Universitat Polit\`ecnica de Val\`encia
}}
\author{\textbf{Jos\'e Bonet}}
\date{}

\maketitle

\textbf{Abstract.} The purpose of this seminar, which was presented at the Universitat Polit\`ecnica de Val\`encia in late 2012, is to explain several results concerning the bounded approximation property for Fr\'echet spaces. We give a full detailed proof of an important result due to Pe\l czy\'nski \cite{041} (see also \cite{033}) that asserts that every separable Fr\'echet space with the bounded approximation property is isomorphic to a complemented subspace of a Fr\'echet space with a Schauder basis. We also explain Vogt's example (cf. \cite{3}) of a nuclear Fr\'echet space without the bounded approximation property. This example is simpler than the original counterexample due to Dubinski. These examples solved a long standing problem of Grothendieck. Vogt \cite{1} obtained another simple example of a nuclear Fr\'echet function space without the bounded approximation property. The relation of the bounded approximation property for Fr\'echet spaces with a continuous norm and the countably normable spaces, including several results due to Dubinski and Vogt \cite{2}, is also explained.

\section{Introduction}
A topological vector space $E$ is a Fr\'echet space if it is metrizable, complete and locally convex. We use below the abbreviation ``lcs'' for ``locally convex space''. The topology of $E$ is defined by a fundamental system of seminorms $p_1 \leq p_2 \leq \ldots \leq p_n \leq \ldots $ satisfying that for each $x \in E, x \neq 0,$ there exists $n \in \N$ such that $p_n\left(x\right) > 0$. Recall that for every neighbourhood of the origin  $U \in U_0\left(E\right)$, there exist $n \in \N$ and $\varepsilon > 0$ such that $\left\{ x \in U : p_n\left(x\right) < \varepsilon \right\} \subset U$. We may assume that  a basis of neighborhoods of the origin is given  by $U_n := \left\{ x \in E : p_n\left(x\right) < 1 \right\}, \ n \in \N$. We say that $B$ is a bounded set in $E$, and we write $B \in \mathfrak{B}\left(E\right)$,  if $\sup_{ b \in B} p_n\left(b\right) < \infty$ for every $n \in \N$.

Let $\left(p_n\right)_n$ and $\left(q_m\right)_m$ be fundamental system of seminorms in $E$ and $F$ respectively. A linear operator $T: E \rightarrow F$ is continuous operator if and only if for every $m \in N$ there exists $n \in \N$ and $C > 0$ such that $q_m\left(T\left(x\right)\right) \leq p_n\left(x\right)$ for every $x \in E$. We denote $L\left(E, F\right)$ the space of linear and continuous operators from $E$ to $F$. A set $H \subset L\left(E, F\right)$ is \textit{equicontinuous} if for every $m \in \N$ there exists $n \in \N$ and $C > 0$ such that $q_m\left(T\left(x\right)\right) \leq p_n\left(x\right)$ for every $x \in E$ and for every $T \in H$. Note that this condition is equivalent to the fact that $\cap_{ T \in H} T^{-1}\left(V\right) \in U_0\left(E\right)$ for every $V \in U_0\left(F\right)$. It is also important to recall Banach-Steinhaus' theorem for Fr\'echet spaces: Let $E$ be a Fr\'echet space. $H \subset L\left(E, F\right)$ is  equicontinuous if and only if for every $x \in E$, $H\left(x\right) := \left\{ T\left(x\right) : T \in H \right\}$ is a bounded set of $F$.

\begin{defn}
We say that $E$ admits a continuous norm if there exists a norm $\left\| \cdot \right\| : E \rightarrow \R $ that is continuous for the topology of $E$; that is there exists a norm $\left\| \cdot \right\| : E \rightarrow \R $  such that there exists $n \in \N$ and $C > 0$ with $\left\|x \right\| \leq Cp_n\left(x\right)$ for every $x \in E$. If $E$ has a continuous norm, we can choose a fundamental system of seminorms $p_1 \leq p_2 \leq \ldots \leq p_n \leq \ldots $ in $E$ such that  $p_k$ is a norm for every $k \in \N$.
\end{defn}

\begin{eje}
\begin{enumerate}
\item The space $H\left( \om \right)$ with $\om \subset \mathbb{C}$ an connected open set in the complex plane endowed with the topology of uniform convergence on the compact subsets of $\om$ is a Fr\'echet space that admits a continuous norm and is not normable.
\item The space Fr\'echet $C^{\infty}\left( \left[ 0, 1 \right]\right)$ endowed by the topology given by the seminorms
             $$p_n\left(f\right) := \max_{ 1 \leq \alpha \leq n} \sup_{x \in \left[0, 1\right]} \left| f^{\left(\alpha\right)}\left( x\right)\right|,$$
     also admits a continuous norm.
\item The space $\omega := \K^{\N}$ endowed by the topology given by the seminorms
             $$p_n\left(x\right) := \max_{ 1 \leq j \leq n} \left| x_j \right| \mbox{, with } x = \left(x_j\right)_j, $$
      does not admit a continuous norm.
\item The space $C^{\infty}\left(\om\right)$ endowed by the topology given by the seminorms
              $$p_n\left(f\right) := \max_{ 1 \leq x \leq n} \sup_{x \in K_n } \left| f^{\left(\alpha\right)} \left(x \right)\right|,$$
      where $K_1 \subset K_2 \subset \ldots \subset K_n \subset \ldots$ is a fundamental sequence of compact subsets in $\om$, does not admit a continuous norm.
\end{enumerate}
\end{eje}

There are two important results concerning Fr\'echet spaces with does not have a continuous norm.

\begin{thrm}
(\textbf{Bessaga, Pe\l czy\'nski}) A Fr\'echet space does not have a continuous norm if and only if $\omega$ is isomorphic to a complemented subspace of $E$.
\end{thrm}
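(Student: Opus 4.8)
The plan is to prove both implications, with the forward direction (no continuous norm $\Rightarrow$ $\omega$ complemented) being the substantial one. First I would record the convenient reformulation: writing $N_n := \ker p_n = \{x : p_n(x)=0\}$, these are closed subspaces with $N_1 \supseteq N_2 \supseteq \cdots$ and $\bigcap_n N_n = \{0\}$ since the seminorms separate points. A continuous norm exists exactly when some $p_n$ is already a norm, i.e. when some $N_n = \{0\}$; hence $E$ has \emph{no} continuous norm if and only if $N_n \neq \{0\}$ for every $n$. The implication ($\Leftarrow$) is then immediate: if a norm $\left\| \cdot \right\| \leq C p_n$ existed on $E$, its restriction to a copy of $\omega$ inside $E$ would be a continuous norm on $\omega$, contradicting the fact (Example, item 3) that $\omega$ admits none; here complementation is not even needed, only that $\omega$ embeds topologically.

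For ($\Rightarrow$) the plan is to build a complemented copy of $\omega$ by an explicit biorthogonal system. Using $N_n \neq \{0\}$ for all $n$, I would construct inductively an increasing sequence $n_1 < n_2 < \cdots$ and nonzero vectors $x_k \in E$ so that $x_k \in \ker p_{n_{k-1}}$ (for $k \geq 2$) while $p_{n_k}(x_k) = 1$: choosing $x_k$ in the nonzero space $\ker p_{n_{k-1}}$ forces $p_j(x_k)=0$ for all $j \leq n_{k-1}$, so some index $n_k > n_{k-1}$ has $p_{n_k}(x_k) > 0$, and we normalize. This mimics the unit vectors of $\omega$, where $e_k$ is killed by the first $k-1$ seminorms. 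Next I would attach functionals: Hahn--Banach applied in the normed quotient $E/\ker p_{n_k}$ gives $g_k \in E'$ with $|g_k| \leq p_{n_k}$ and $g_k(x_k)=1$; since $x_j \in \ker p_{n_{j-1}} \subseteq \ker p_{n_k}$ for $j > k$, one gets $g_k(x_j)=0$ automatically for $j>k$, so the matrix $[g_i(x_j)]$ is triangular with unit diagonal. A finite triangular correction $f_k := \sum_{i \leq k} c_{ki} g_i$ then yields exact biorthogonality $f_k(x_j)=\delta_{kj}$ together with a continuity bound $|f_k| \leq C_k\, p_{n_k}$, because $p_{n_i} \leq p_{n_k}$ for $i \leq k$.

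With the system in hand I would define $P(x) := \sum_k f_k(x)\, x_k$ and let $M := \overline{\Span}\{x_k : k \in \N\}$. The decisive observation is a finiteness phenomenon: for a fixed seminorm $p_m$ one has $p_m(x_k) = 0$ as soon as $n_{k-1} \geq m$, so each $p_m$ sees only finitely many terms of any series $\sum_k a_k x_k$. This gives at once convergence in the complete space $E$, the estimate $p_m(P(x)) \leq C_m\, p_{n_{K_m}}(x)$ (continuity of $P$), and the fact that $\Psi\colon (a_k) \mapsto \sum_k a_k x_k$ is a continuous map $\omega \to M$. Biorthogonality gives $P(x_j)=x_j$, hence $P|_M = \mathrm{id}$, $P^2 = P$, and $P(E)=M$, so $M$ is complemented. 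Finally $\Phi\colon y \mapsto (f_k(y))_k$ is continuous $M \to \omega$ by the bound on $f_k$, and $\Phi \circ \Psi = \mathrm{id}_\omega$ while $\Psi \circ \Phi = P|_M = \mathrm{id}_M$; thus $M \cong \omega$.

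I expect the main obstacle to be organizing the interplay between biorthogonality and continuity control. The Hahn--Banach functionals only provide the lower-triangular relations for free, and the triangular correction that upgrades these to exact biorthogonality must be shown not to destroy the bound $|f_k| \leq C_k p_{n_k}$, which is what ultimately makes both $P$ and $\Phi$ continuous; the bookkeeping of the indices $n_k$ so that the finiteness phenomenon holds uniformly is the delicate part. Once that is secured, the convergence, projection, and isomorphism verifications are routine applications of the finiteness observation and completeness of $E$.
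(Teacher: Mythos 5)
The paper records this theorem of Bessaga and Pe\l czy\'nski as background and gives no proof of it, so there is no internal argument to compare yours against; judged on its own, your proof is correct and complete, and it is in essence the standard argument for this result. The reduction to the statement that $E$ lacks a continuous norm if and only if $\ker p_n \neq \{0\}$ for every $n$, the inductive choice of $x_k \in \ker p_{n_{k-1}}$ with $p_{n_k}\left(x_k\right) = 1$, the Hahn--Banach functionals with $\left|g_k\right| \leq p_{n_k}$ (which automatically satisfy $g_k\left(x_j\right) = 0$ for $j > k$, since $p_{n_k} \leq p_{n_{j-1}}$ annihilates $x_j$), and the unitriangular correction yielding exact biorthogonality while preserving the bound $\left|f_k\right| \leq C_k p_{n_k}$ all work as you describe. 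The decisive finiteness observation --- each fixed $p_m$ kills all but finitely many $x_k$ --- does indeed give convergence of $\sum_k a_k x_k$ for \emph{arbitrary} scalars (this is exactly what makes the closed span a copy of $\omega$ rather than of some smaller sequence space), continuity of $P\left(x\right) = \sum_k f_k\left(x\right) x_k$, and the mutually inverse continuous maps $\Psi$ and $\Phi$; completeness of $E$ is used correctly to sum the series. The easy direction via the absence of a continuous norm on $\omega$ (item 3 of the paper's Example) is also fine, and, as you note, it needs only an embedded copy of $\omega$, not a complemented one.
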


\begin{thrm}
(\textbf{Eidelheit}) If $E$ is a Fr\'echet space that is not normable then $E$ have a isomorphic quotient in $\omega$.
\end{thrm}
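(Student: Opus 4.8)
The plan is to realize $\omega$ as a quotient of $E$ by producing a continuous linear surjection $T\colon E\to\omega$ of the form $T(x)=(f_k(x))_k$ for a well-chosen sequence $(f_k)\subset E'$; then $\omega\cong E/\ker T$ is the asserted quotient. Throughout I write $E'_n:=U_n^\circ=\{f\in E':|f|\le p_n\}$ for the $p_n$-continuous functionals, so that $E'=\bigcup_n E'_n$ with $E'_1\subseteq E'_2\subseteq\cdots$. The engine of the whole argument is the following consequence of non-normability: for every $n$ one has $E'_n\subsetneq E'$. Indeed, if $E'=E'_n$ for some $n$, then, since each $U_m^\circ$ is $\sigma(E',E)$-compact by Alaoglu's theorem and $E'=\bigcup_\lambda\lambda U_n^\circ$ is absorbing, a compactness and bipolar argument gives $U_m^\circ\subseteq\lambda U_n^\circ$ for each $m$, whence $U_n$ absorbs every $U_m$; thus $U_n$ is bounded and $E$ is normable, against the hypothesis. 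So there are continuous functionals of arbitrarily high ``level''.

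Next I would carry out the inductive construction. I choose indices $n_1<n_2<\cdots$ and functionals $f_k\in E'$ such that $f_k$ is $p_{n_k}$-continuous but $f_k\notin E'_{n_k-1}$; this is possible at each step by the strictness just proved, picking $f_k\in E'\setminus E'_{n_k-1}$ and then taking $n_k>n_{k-1}$ large enough that $f_k\in E'_{n_k}$. The map $T=(f_k)_k\colon E\to\omega$ is automatically continuous, because $\omega$ carries the topology of coordinatewise convergence and each $f_k\in E'$. To prove surjectivity I would use the dual characterization of surjections between Fr\'echet spaces: $T$ is onto if and only if the transpose $T'\colon\omega'\to E'$ is injective with $\sigma(E',E)$-closed range. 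Here $\omega'$ is the space of finitely supported sequences and $T'(u)=\sum_k u_kf_k$, so $\operatorname{im}T'=M:=\Span\{f_k:k\in\N\}$. Injectivity of $T'$ (equivalently, linear independence of the $f_k$) follows from a top-index argument: a relation $\sum_{k\le K}u_kf_k=0$ with $u_K\ne0$ would give $u_Kf_K=-\sum_{k<K}u_kf_k\in E'_{n_K-1}$ (each $f_k$ with $k<K$ lies in $E'_{n_k}\subseteq E'_{n_K-1}$), contradicting $f_K\notin E'_{n_K-1}$.

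The heart of the proof, and the step I expect to be the main obstacle, is the weak*-closedness of $M$. I would invoke the Banach--Dieudonn\'e (Krein--\v Smulian) theorem for the dual of a Fr\'echet space: a subspace $M\subseteq E'$ is $\sigma(E',E)$-closed provided $M\cap U_n^\circ$ is $\sigma(E',E)$-closed for every $n$. Fix $n$ and take $g=\sum_{k\le K}u_kf_k\in M$ with $|g|\le p_n$ and $u_K\ne0$. If $n_K>n$, then $n_K-1\ge n$, so $g\in E'_n\subseteq E'_{n_K-1}$, while also $\sum_{k<K}u_kf_k\in E'_{n_K-1}$; subtracting gives $f_K\in E'_{n_K-1}$, a contradiction. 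Hence every element of $M\cap U_n^\circ$ involves only those finitely many $f_k$ with $n_k\le n$, so $M\cap U_n^\circ$ lies in the finite-dimensional subspace $\Span\{f_k:n_k\le n\}$ and, being the intersection of a weak*-closed finite-dimensional space with the weak*-compact set $U_n^\circ$, is weak*-closed. By Banach--Dieudonn\'e, $M$ is weak*-closed, and the duality theorem then yields that $T$ is surjective; therefore $\omega\cong E/\ker T$ is a quotient of $E$.

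Finally, I would flag why the tempting elementary route is doomed, since this clarifies that the closed-range step is genuinely necessary. If one builds a biorthogonal system $f_j(a_k)=\delta_{jk}$ and tries to solve $f_k(x)=c_k$ by setting $x=\sum_k c_ka_k$, the coefficients are forced by triangularity, yet for an arbitrary $(c_k)\in\omega$ the series need not converge in $E$: one cannot make $p_{n_j}(a_k)$ decay faster than the prescribed, arbitrarily fast growth of $|c_k|$. This is exactly why surjectivity onto all of $\omega$ must be obtained not by explicit summation but through the weak*-closed-range criterion above, and why the strict ``staircase'' condition $f_k\notin E'_{n_k-1}$ (which is where non-normability is indispensable) is the crucial ingredient.
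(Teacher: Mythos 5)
The paper offers no proof to compare against: Eidelheit's theorem is stated as a known result, and the only supporting material is the subsequent concrete example ($H(\C)\to\omega$, $f\mapsto(f(z_n))_n$, surjective by Weierstrass interpolation and open by the open mapping theorem). Judged on its own, your argument is correct in its architecture and is essentially the dual form of the classical proof: non-normability forces the sets of $p_n$-continuous functionals to satisfy $E_n'\subsetneq E'$ for every $n$; the staircase functionals $f_k\in E_{n_k}'\setminus E_{n_k-1}'$ give a continuous $T=(f_k)_k\colon E\to\omega$; the top-index argument makes $T'$ injective; the trace of $M=\Span\{f_k:k\in\N\}$ on each polar $U_n^\circ$ lies in the finite-dimensional space $\Span\{f_k:n_k\le n\}$ and is therefore weak*-closed, so the Krein--\v{S}mulian theorem (valid for Fr\'echet spaces) makes $M$ weak*-closed; the surjectivity criterion for operators between Fr\'echet spaces then yields $T(E)=\omega$, and $\omega\cong E/\ker T$ by the open mapping theorem. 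What your route buys over the paper's text is the general theorem rather than one instance of it; what it costs is the closed-range machinery.

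Three small repairs are needed, none of them structural. First, $E_n'$ must be defined as the $p_n$-continuous functionals, i.e.\ $\bigcup_{\lambda>0}\lambda U_n^\circ$, not as $U_n^\circ$ itself: you use $E_n'$ as a linear subspace (e.g.\ when subtracting to conclude $u_Kf_K\in E_{n_K-1}'$), and with $E_n'=U_n^\circ$ even the identity $E'=\bigcup_nE_n'$ can fail. Second, in the normability step the correct consequence of $U_m^\circ\subseteq\lambda U_n^\circ$ is, by the bipolar theorem, $U_n\subseteq\lambda U_m$, i.e.\ \emph{every $U_m$ absorbs $U_n$}, so that $U_n$ is bounded and Kolmogorov's criterion applies; you wrote the absorption the wrong way round (a shorter route: $E'=E_n'$ makes $U_n$ weakly bounded, hence bounded by Mackey's theorem). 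Third, in the inductive construction you should take $n_k$ \emph{minimal} with $f_k\in E_{n_k}'$: having picked $f_k\notin E_{n_{k-1}}'$, an arbitrary large admissible $n_k$ only guarantees $f_k\notin E_{n_{k-1}}'$, whereas your closedness and independence arguments use the stronger property $f_k\notin E_{n_k-1}'$. Finally, your closing remark that the elementary route is ``doomed'' is overstated: Eidelheit's original proof is precisely a direct construction. Since $f_k\notin E_{n_k-1}'$, the restriction of $f_k$ to $\bigcap_{i<k}\ker f_i$ is still not $p_{n_{k-1}}$-continuous, so given targets $(c_k)\in\omega$ one can choose $x_k\in\bigcap_{i<k}\ker f_i$ with $p_{n_{k-1}}(x_k)\le 2^{-k}$ and $f_k(x_k)=c_k-\sum_{i<k}f_k(x_i)$; then $\sum_kx_k$ converges in $E$ and its sum $x$ satisfies $f_k(x)=c_k$ for all $k$. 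What fails is only the naive version with a fixed biorthogonal system; the cure is to let $x_k$ depend on the targets, not to abandon the direct approach.
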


\begin{eje}
Here is a concrete example of a not normable Fr\'echet space with a quotient isomorphic to $\omega$: Consider the Fr\'echet space $H\left(\C\right)$ of entire functions endowed with the compact open topology. Select a sequence $\left(z_n\right)$ in $\C$ such that $|z_{n+1}| > |z_n|$ for each $n \in \N$ and $\lim_{n \rightarrow \infty} |z_n|= \infty$. The linear map $T: H\left(\C\right) \rightarrow \omega$ defined by  $f \mapsto f\left(z_n\right)$ is surjective by Weierstrass interpolation Theorem. The map $T$ is clearly continuous and it is open by the open mapping theorem for Fr\'echet spaces.
\end{eje}

\begin{defn}
A lcs $E$ has the \textit{bounded approximation property} (BAP) if there exists an equicontinuous net $\left(A_j\right)_{j \in J} \subset L\left(E\right)$  with dim$\left(A_j\left(E\right)\right) < \infty$ for every $j \in J$ and $\lim_{ j \in J } A_j\left(x\right) = x$ for every $x \in E$. In other words, the net $\left(A_j\right)_{j \in J}$ converges to the identity in the space $L_s\left(E\right)$, i.e.\ for the topology of pointwise or simple convergence.
\end{defn}

\begin{remark}
Let $H \subset L\left(E, F\right)$ be equicontinuous.  If $N \subset E$ is a total subset of $E$ (i.e. $\overline{\textrm{span}\left(N\right) } = E$), then the topologies of simple convergence on $E$ ($\T_s\left(E\right)$) and on $N$ ($\T_s\left(N\right)$) coincide in $H$ (\cite[39.4.(1)]{019}). In particular, if $E$ is separable and $F$ is metrizable, then the topology $\T_s\left(E\right)$ of simple convergence on $E$ is metrizable on every equicontinuous subset $H$ oh $L\left(E,F\right)$ (\cite[39.4.(7)]{019}).
\end{remark}

\begin{con}
If $E$ is a separable, metrizable lcs then $E$ has the BAP if and only if there exists $\left(A_n\right)_n \subset L\left(E\right)$ (a sequence) which is equicontinuous, dim$\left(A_n\left(E\right)\right) < \infty$ for every $n \in \N$ and $\lim_{n \to \infty} A_n\left(x\right) = x$ for each $x \in E$.

In case that $E$ is barrelled metrizable and separable, $E$ has BAP if and only if there exists $\left(A_n\right)_n \subset L\left(E\right)$ with dim$\left(A_n\left(E\right)\right) < \infty$ for every $n \in \N$ and $\lim_{n \to \infty} A_n\left(x\right) = x$ for each $x \in E$. This is a consequence of Banach-Steinhaus Theorem.
\end{con}

\begin{remark}
Let $H$ be an equicontinuous subset of $L\left(E,F\right)$. By \cite[39.4.(2)]{019} the topology $\T_s\left(E\right)$ and the topology $\T_c\left(E\right)$ of uniform convergence of precompact subsets of $E$ coincide on $H$.
\end{remark}

\begin{con}
If $E$ has the BAP, $A_j \to I$ with $j \in J$ uniformly on the precompact subsets of $E$. Accordingly, the BAP implies approximation property.
\end{con}

In what follows $E$ is a separable Fr\'echet space, and $ p_1 \leq p_2 \leq \ldots \leq p_k \leq p_{k+1} \leq \ldots $ is a fundamental system of seminorms in $E$.

We assume, without loss of generality, that $U_k := \left\{ x \in E : p_k\left(x\right) \leq 1 \right\}$, with $k \in \N$, form a basis of 0-neighborhoods in $E$.

In case $E$ is a Fr\'echet space with a continuous norm, we assume without loss of generality that all the elements of the fundamental system of seminorms $p_1 \leq p_2 \leq \ldots \leq p_k \leq p_{k+1} \leq \ldots $ are in fact norms.

\begin{remark}
If $E$ is a separable Fr\'echet space with a fundamental system of seminorms $\left(p_k\right)_k$ has the BAP, we can find $\left(A_n\right)_n \subset L\left(E\right)$, with dim$\left(A_n\left(E\right)\right) < \infty$ for every $n \in \N$ and $\lim_{n \to \infty} A_n\left(x\right) = x$ for each $x \in E$. By the Banach-Steinhaus Theorem, $\left(A_n\right)_n$ is equicontinuous. Therefore, for every $k \in \N$ there exists $l \geq k$ and $C_k > 0$ with $p_k\left(A_n\left(x\right)\right) = C_k p_{l\left(k\right)}\left(x\right)$ for every $x \in E$ and for every $n \in \N$.
\end{remark}

\begin{prop}
Let $E$ be a separable Fr\'echet space, then the following conditions are equivalent:
\begin{enumerate}
\item The BAP holds in $E$,
\item There exists $\left(A_n\right)_n \subset L\left(E\right) $,  with dim$\left(A_n\left(E\right)\right) < \infty$ for every $n \in \N$ and $\lim_{n \to \infty} A_n\left(x\right) = x$ for each $x \in E$,
\item There exists $\left(B_n\right)_n \subset L\left(E\right) $,  with dim$\left(B_n\left(E\right)\right) < \infty$ for every $n \in \N$ and $\sum_{n = 1}^{\infty} B_n\left(x\right) = x$ for each $x \in E$.
\end{enumerate}
\end{prop}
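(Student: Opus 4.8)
The plan is to prove the cycle of implications by separating the elementary algebraic part from the topological part: the equivalence (2) $\Leftrightarrow$ (3) is a direct telescoping argument, while the equivalence (1) $\Leftrightarrow$ (2) rests on the Banach--Steinhaus theorem and on the metrizability of the topology of simple convergence on equicontinuous sets, and is essentially the content of the Consequence already recorded.

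I would first dispose of (1) $\Leftrightarrow$ (2). For (2) $\Rightarrow$ (1): since $E$ is Fr\'echet, hence barrelled, any sequence $(A_n)_n$ with $\lim_n A_n(x) = x$ for all $x$ has every orbit $\{A_n(x)\}$ bounded, so Banach--Steinhaus makes $(A_n)_n$ equicontinuous; regarding $\N$ as a directed set, $(A_n)_n$ is then an equicontinuous net of finite-rank operators converging pointwise to the identity in $L_s(E)$, which is exactly the BAP. For (1) $\Rightarrow$ (2): the BAP furnishes an equicontinuous net $(A_j)_{j\in J}$ of finite-rank operators with $A_j \to I$ in $L_s(E)$. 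Fixing a countable dense set $N \subset E$ by separability, the Remark shows that on the (still equicontinuous) $\T_s(E)$-closure of $\{A_j : j \in J\}$ the topologies $\T_s(E)$ and $\T_s(N)$ coincide, and the latter is metrizable because $N$ is countable and $E$ is metrizable. Hence $I$, lying in that closure, is the limit of a sequence $(A_n)_n$ selected from $\{A_j\}$; each such $A_n$ has finite-dimensional range, which gives (2).

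The combinatorial heart is (2) $\Leftrightarrow$ (3), and the key observation is telescoping. For (2) $\Rightarrow$ (3) I set $B_1 := A_1$ and $B_n := A_n - A_{n-1}$ for $n \geq 2$; each $B_n$ is a difference of finite-rank operators, so $\dim(B_n(E)) < \infty$, and the partial sums collapse to $\sum_{n=1}^N B_n = A_N$. Therefore $\sum_{n=1}^\infty B_n(x) = \lim_N A_N(x) = x$ for every $x \in E$, which is (3). Conversely, for (3) $\Rightarrow$ (2) I put $A_N := \sum_{n=1}^N B_n$, a finite sum of finite-rank operators and hence of finite rank, and by hypothesis $A_N(x) \to x$ for each $x$, yielding (2).

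In contrast with the purely algebraic identities of (2) $\Leftrightarrow$ (3), the step I expect to demand the most care is the extraction of a sequence in (1) $\Rightarrow$ (2): one must legitimately pass from a net indexed by an arbitrary directed set to an ordinary sequence, and this is precisely where separability of $E$ together with the metrizability of $\T_s$ on equicontinuous subsets (the Remark) are indispensable. Once that reduction is secured, the remaining implications are immediate.
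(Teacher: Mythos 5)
Your proof is correct and follows essentially the same route as the paper: Banach--Steinhaus for (2) $\Rightarrow$ (1), separability plus metrizability of the topology of simple convergence on equicontinuous sets for the net-to-sequence extraction in (1) $\Rightarrow$ (2), and the telescoping/partial-sum identities for (2) $\Leftrightarrow$ (3). Your version merely spells out a detail the paper leaves implicit, namely that one passes to the (still equicontinuous) $\T_s(E)$-closure of $\left\{A_j\right\}$ and uses metrizability there to select the sequence.
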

\begin{proof}
$(1)\Rightarrow(2)$ Since $E$ is a separable space, there exists a countable dense subset $F$ of $E$ Accordingly, the following topologies coincide on the  equicontinuous subsets of $L\left(E\right)$:
\begin{itemize}
\item Uniform convergence over the compact sets of $E$,
\item Pointwise convergence on $E$,
\item Pointwise convergence on $F$.
\end{itemize}
As the topology of pointwise convergence on $F$ is metrizable, the results holds.

$(2) \Rightarrow (1) $ Since $A_n\left(x\right)$ converges to $x$ when $n$ tends to infinity, then $\left\{ A_n\left(x\right) \right\}_n $ is bounded in $E$ for every $x \in E$. By Banach-Steinhaus' theorem, $\left\{A_n\right\}_n $ is equicontinuous.

$(2) \Rightarrow (3)$ Take $B_1 := A_1$ and $B_{n+1} := A_{n+1} - A_n$, for every $n \in \N$, to get the result.

$(3) \Rightarrow (2)$ Now set $A_{n} := B_{1} + \ldots + B_n$ for every $n \in \N$.
\end{proof}

\begin{remark}
Let $\left(p_k\right)_k $ be a fundamental system of seminorms in $E$. Define $$q_k\left(x\right):= \sup_{n \in \N} p_k \left( \sum_{ i = 1}^n B_i\left(x\right) \right),$$ for every $x \in E$ and for every $k \in \N$.

Since $x= \lim_{n \rightarrow \infty} \sum_{ i = 1}^n B_i\left(x\right)$, we have $p_k\left(x\right) = \lim_{n \to \infty} p_k \left( \sum_{i = 1}^n B_i\left(x\right)\right)$ and this implies that $ p_k\left(x\right) \leq q_k\left(x\right) $ for every $ x \in E $ and for every $k \in \N$.

Observe that $\sum_{i =1}^n B_i = A_n $ for each $n \in \N$. Hence $p_k\left(\sum_{i=1}^n B_i\left(x\right)\right) = p_k\left(A_n\left(x\right)\right) \leq C_k p_{l\left(k\right)}\left(x\right)$ for each $n \in \N$. Thus $q_k\left(x\right) \leq C_k p_{l\left(k\right)}\left(x\right)$ for every $x \in E$ and for every $k \in \N$. And the sequence of seminorms $\left(q_k\right)_k$ is a fundamental system of seminorms in $E$.
\end{remark}

\begin{defn}
We say that $\left\{ x_n \right\} \subset E$ is a Schauder basis in $E$ with coefficient functionals $\left\{ x_n' \right\}$ if:
\begin{itemize}
\item For every $k, n \in \N$, $\langle x_k', x_n \rangle = \delta_{n, k}$
\item For every $x \in E$, $ x = \sum_{ n = 1}^{\infty} \langle x_n', x \rangle x_n $, the series converging in  $E$.
\end{itemize}
\end{defn}

\begin{eje}
Some spaces with Schauder basis are K\"othe echelon spaces, and the Banach sequence spaces $\ell_p, 1 \leq p < \infty,$ and $c_0$.
\end{eje}

\begin{prop}
The following results holds:
\begin{enumerate}
\item[(1)] If $E$ is a lcs with the BAP and $F \subset E$ is complemented, then $F$ has the BAP, too.
\item[(2)] If $E$ is a barrelled lcs with a Schauder basis, then  $E$ has the BAP.
\end{enumerate}
\end{prop}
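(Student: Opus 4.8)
The plan is to treat the two parts separately, in each case exhibiting explicitly an equicontinuous net (in fact a sequence) of finite-rank operators converging pointwise to the identity, as required by the definition of the BAP.

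For part (1), since $F$ is complemented in $E$ there is a continuous projection $P \in L\left(E\right)$ with $P\left(E\right) = F$ and $P^2 = P$; in particular $Px = x$ for every $x \in F$. Let $\left(A_j\right)_{j \in J} \subset L\left(E\right)$ be the equicontinuous net witnessing the BAP of $E$, so each $A_j$ has finite-dimensional range and $A_j\left(x\right) \to x$ for every $x \in E$. Viewing $F \subset E$, I would define $T_j := P \circ A_j|_F : F \to F$. First I would check that $T_j$ has finite-dimensional range, which is immediate since $T_j\left(F\right) \subset P\left(A_j\left(E\right)\right)$ and $A_j\left(E\right)$ is finite dimensional. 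Next, equicontinuity of $\left(T_j\right)_{j \in J}$ follows from equicontinuity of $\left(A_j\right)_{j \in J}$ together with the continuity of the inclusion $F \hookrightarrow E$ and of $P$: pre- and post-composing an equicontinuous family by fixed continuous linear maps preserves equicontinuity. Finally, for $x \in F$ we have $A_j\left(x\right) \to x$ in $E$, so by continuity of $P$, $T_j\left(x\right) = P\left(A_j\left(x\right)\right) \to Px = x$. Hence $\left(T_j\right)_{j \in J}$ witnesses the BAP of $F$.

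For part (2), let $\left\{x_n\right\}$ be a Schauder basis of $E$ with continuous coefficient functionals $\left\{x_n'\right\} \subset E'$, and consider the canonical partial-sum operators $S_n \in L\left(E\right)$ defined by $S_n\left(x\right) := \sum_{i=1}^n \langle x_i', x\rangle x_i$. Each $S_n$ is continuous as a finite sum of continuous rank-one maps, and $S_n\left(E\right) \subset \mathrm{span}\left\{x_1, \ldots, x_n\right\}$ is finite dimensional. By the second defining property of a Schauder basis, $S_n\left(x\right) \to x$ for every $x \in E$. Thus the only missing ingredient is equicontinuity of $\left(S_n\right)_n$.

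The equicontinuity is the main obstacle, and it is exactly where barrelledness is used. Since $S_n\left(x\right) \to x$, the sequence $\left\{S_n\left(x\right)\right\}_n$ is convergent, hence bounded, for every $x \in E$; that is, $\left(S_n\right)_n$ is pointwise bounded. The Banach-Steinhaus theorem recalled in the introduction is stated for Fr\'echet spaces, but the relevant fact here is the general uniform boundedness principle, which characterises barrelled spaces: every pointwise bounded subset of $L\left(E, E\right)$ with $E$ barrelled is equicontinuous. Applying this to the pointwise bounded sequence $\left(S_n\right)_n$ yields equicontinuity, and therefore $\left(S_n\right)_n$ is an equicontinuous sequence of finite-rank operators converging pointwise to the identity, so $E$ has the BAP. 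The one point to keep in mind is that continuity of the coefficient functionals $x_n'$ (implicit in the notation $x_n' \in E'$) is what makes each $S_n$ continuous in the first place; without it Banach-Steinhaus could not even be invoked.
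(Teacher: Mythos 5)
Your proposal is correct and follows essentially the same route as the paper: in part (1) your $T_j = P \circ A_j|_F$ is exactly the paper's $B_\tau = P A_\tau J$, with the same three checks (finite rank, equicontinuity by composing with fixed continuous maps, pointwise convergence via continuity of $P$), and in part (2) you use the same partial-sum operators and invoke barrelledness through the uniform boundedness principle to obtain equicontinuity, just as the paper does via Banach--Steinhaus. Your explicit remark that continuity of the coefficient functionals is what makes each $S_n$ continuous matches the paper's implicit assumption $(x_n')_n \subset E'$.
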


\begin{proof}
 (1) Let $\left(A_{\tau}\right)_{\tau \in T} \subset L\left(E\right)$ be an equicontinuous net such that dim$A_{\tau}\left(E\right) < \infty$ for every $\tau \in T$ and $\lim_{\tau \in T} A_{\tau}\left(x\right) = x $ for each $x \in E$. Let $F \subset E$ be complemented. Denote by $J: F \rightarrow E$  the canonical inclusion and by $P: E \rightarrow F$ the projection.
For each $\tau \in T$, define $B_{\tau} : F \rightarrow F$ by $B_{\tau} := P A_{\tau} J$. Clearly, dim$B_{\tau}\left(F\right) \leq $ dim$A_{\tau}\left(F\right) < \infty$ and for every $ q \in $ cs$\left(E\right)$ there exists $ q' \in $ cs$\left(E\right)$ such that $q\left(A_{\tau}\left(x\right)\right) \leq q'\left(x\right)$ for every $x \in E$ and for every $\tau \in E$. Moreover, as $P: E \rightarrow F$ is continuous, given $p \in $ cs$\left(E\right)$ there exists $q \in$ cs$\left(E\right)$ such that $p\left(Px\right) \leq q\left(x\right)$ for every $x \in E$. Then
\begin{equation*}\label{eq01}
        p\left(B_{\tau}\left(x\right)\right) = p\left( P A_{\tau} J\left(x\right)\right) = p\left( P A_{\tau} \left(x\right)\right) \leq q\left(A_{\tau} \left(x\right)\right) \leq q'\left(x\right)
\end{equation*}
for every $ x \in F $ and for every $ \tau \in T$. Thus $\left(B_{\tau}\right)_{\tau \in T}$ is equicontinuous in $L\left(F\right)$. Finally, for $x \in F$,
\begin{equation*}\label{eq02}
    \lim_{ \tau \in T} B_{\tau}\left(x\right) = \lim_{\tau \in T} P A_{\tau} \left(x\right) = P \left( \lim_{ \tau \in T} A_{\tau} \left(x\right) \right) = P\left(x\right) = x.
\end{equation*}

\vspace{.1cm}
\noindent
(2) Let $\left(x_n\right)_n \subset E$ be a Schauder basis with coefficient functionals $\left(x_n'\right)_n \subset E'$. That  is $\langle x_k', x_n \rangle = \delta_{k,n} $ and $x = \sum_{ n = 1}^{\infty} x_n'\left(x\right) x_n $ converges in $E$ for each $x \in E$.
Denote by $P_n: E \rightarrow E$ the map $P_n\left(x\right) := \sum_{k = 1}^n x_k'\left(x\right) x_k$, which is a continuous projection onto span$\left(x_1, \ldots, x_n \right)$. Since $E$ is barrelled, $\left(P_n\right)_n$ is equicontinuous. As $\lim_{ n \to \infty} P_n\left(x\right) = x$ for every $x \in E$, we conclude that $E$ has the BAP.
\end{proof}

\begin{thrm} (\textbf{Pe\l czy\'nski. 1971})
Every separable Fr\'echet space $E$ with the BAP is isomorphic to a complemented subspace of a Fr\'echet space $E_0$ with a Schauder basis. If $E$ has a continuous norm, $E_0$ can be chosen with a continuous norm.
\end{thrm}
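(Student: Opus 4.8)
The plan is to realize $E$ as the range of a continuous projection on a sum space assembled from the finite-dimensional ranges of a decomposition of the identity. By the Proposition and the Remark that follows it, I may start from a sequence $(B_n)_n\subset L(E)$ with $\dim B_n(E)<\infty$ and $\sum_{n=1}^\infty B_n(x)=x$ for every $x\in E$, together with the fundamental system of seminorms $q_k(x)=\sup_N p_k(\sum_{n\le N}B_n(x))$ satisfying $p_k\le q_k\le C_k p_{l(k)}$. Writing $F_n:=B_n(E)$, I would let $E_0$ be the space of all sequences $(y_n)_n$ with $y_n\in F_n$ for which $\sum_n y_n$ converges in $E$, equipped with the seminorms
\[ \pi_k\big((y_n)_n\big):=\sup_{N\in\N}p_k\Big(\sum_{n=1}^N y_n\Big). \]
The $\pi_k$ are separating because the $p_k$ are, the coordinate maps are continuous since $p_k(y_m)\le 2\pi_k$, and a routine $\varepsilon/3$ argument using completeness of $E$ shows $(E_0,(\pi_k)_k)$ is a Fréchet space. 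The finitely supported sequences $\bigoplus_n F_n$ are dense: for $u=(y_n)\in E_0$ the truncations $u^{(N)}=(y_1,\dots,y_N,0,\dots)$ satisfy $\pi_k(u-u^{(N)})=\sup_{M>N}p_k(\sum_{N<n\le M}y_n)\to 0$, as the partial sums of $\sum_n y_n$ are Cauchy.

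Next I would produce the complemented embedding via $J:E\to E_0$, $J(x):=(B_n(x))_n$, and $S:E_0\to E$, $S((y_n)_n):=\sum_n y_n$. The operator $J$ is well defined and continuous because the partial sums $A_N(x)=\sum_{n\le N}B_n(x)$ converge and $\pi_k(J(x))=q_k(x)\le C_k p_{l(k)}(x)$; the operator $S$ is continuous because $p_k(S(u))\le\pi_k(u)$, first on $\bigoplus_n F_n$ and then by density. Since $S(J(x))=\lim_N A_N(x)=x$, we get $S\circ J=\mathrm{id}_E$, so $P:=J\circ S$ is a continuous projection of $E_0$ with range $J(E)$, and $J$ is an isomorphism of $E$ onto the complemented subspace $J(E)$, its inverse being the restriction of $S$. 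At this point $E_0$ already carries the finite-dimensional Schauder decomposition $(F_n)_n$: the block truncations $u\mapsto u^{(N)}$ are equicontinuous, with $\pi_k(u^{(N)})\le\pi_k(u)$, and converge pointwise to the identity.

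It remains to upgrade this decomposition to a genuine Schauder basis. Choosing a basis of each $F_n$ and concatenating into a single sequence $(e_m)_m$, the finite linear combinations are exactly the dense subspace $\bigoplus_n F_n$, so by the equicontinuity argument used in the Proposition (pointwise convergence on a dense set plus equicontinuity forces pointwise convergence everywhere) $(e_m)_m$ is a Schauder basis provided all partial-sum operators $P_m$ are equicontinuous, including those falling inside a block. A direct estimate yields $\pi_k(P_m u)\le\pi_k(u)\,(1+2\,b_{n,k})$, where $b_{n,k}$ is the norm of the within-block partial-sum projections of $F_n$ measured in $p_k$ and $n=n(m)$ is the block containing $m$. \textbf{This is the main obstacle}: since the dimensions $\dim F_n$ may grow, the constants $b_{n,k}$ need not stay bounded in $n$ for a fixed $k$, and no choice of bases inside the $F_n$ alone can in general force $\sup_n b_{n,k}<\infty$; indeed, if it could, $E$ itself would have a basis, whereas the theorem only asserts a complemented embedding into a basis space. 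I would therefore resolve it in the spirit of Johnson--Rosenthal--Zippin, enlarging $E_0$ to a Fréchet space in which $E_0$ (hence $E$) stays complemented and in which the blocks are interlaced with auxiliary finite-dimensional spaces carrying their own bases, so that the within-block partial-sum operators become uniformly equicontinuous; the concatenated system is then a Schauder basis of the enlarged space, which is the space required by the theorem.

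Finally, for the continuous-norm refinement: if every $p_k$ is a norm, then each $\pi_k$ is a norm on $E_0$, because $\pi_k(u)=0$ forces every partial sum $\sum_{n\le N}y_n$ to vanish and hence $y_n=0$ for all $n$. Thus $E_0$ admits a continuous norm, and by choosing the auxiliary interlacing blocks in the enlargement to carry continuous norms as well, the final basis space also admits a continuous norm.
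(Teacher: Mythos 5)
Your construction of $E_0$, the embedding $J$, the projection $P=J\circ S$, and the continuous-norm observation are all correct, and they reproduce the outer shell of the paper's proof (the paper's $I$ and $L$ are your $J$ and $J\circ S$). But what you have established at that point is only that $E$ is complemented in a Fr\'echet space with a finite-dimensional Schauder \emph{decomposition}, while the theorem asks for a Schauder \emph{basis}. The step you yourself flag as ``the main obstacle'' is the entire content of Pe\l czy\'nski's theorem, and for it you offer only a plan --- ``enlarging $E_0$ in the spirit of Johnson--Rosenthal--Zippin'' --- with no construction of the auxiliary blocks, no proof that the interlaced partial-sum operators are equicontinuous, and no verification that the enlarged space is Fr\'echet and admits a continuous norm. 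Worse, in the Fr\'echet setting your quantity $b_{n,k}$ need not even be finite: a seminorm $p_k$ can be degenerate on the block $F_n$ (the $p_k$ are genuine norms only for large index, even when $E$ has a continuous norm), and then a within-block partial-sum projection admits no $p_k$-bound at all. So the obstacle is worse than ``the constants may grow'', and any JRZ-type interlacing would first have to solve exactly this quantitative problem; the gap is not a missing citation but the central argument.

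The paper fills this gap with two concrete devices that your sketch would need to reproduce. First, a kernel filtration adapted to the seminorms: after passing to seminorms $\left\|\cdot\right\|_n$ that are norms on $E_n:=A_n(E)$, each block is decomposed as $E_n=H_0^n\oplus H_1^n\oplus\ldots\oplus H_{n-2}^n\oplus F_{n-1}^n$ along the flag of kernels $F_j^n=(\textrm{Ker}\,\left\|\cdot\right\|_j)\cap E_n$; a basis adapted to this flag yields rank-one operators $B_j^p$ on $E_p$ with $e=\sum_{j=1}^{m_p}B_j^p e$ and $\max_{j}\left\|B_j^p e\right\|_k\leq R_p\left\|e\right\|_k$ for \emph{all} $k\leq p$ (finiteness for every $k\leq p$ is precisely what the filtration buys). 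Second, Pe\l czy\'nski's dilution trick: choosing $N_p\geq m_pR_p$ and replacing the block by $N_p$ repetitions of the scaled operators $N_p^{-1}B_j^p$, every within-block partial sum satisfies a bound by $2\left\|e\right\|_k$ for $k\leq p$, uniformly in $p$. Only after this does the sequence space, now built from one-dimensional coordinate spaces with monotone partial-sum seminorms, have a genuine Schauder basis (Theorem 14.3.6 in Jarchow). Incidentally, your justification of the obstacle is also off: uniform bounds $\sup_n b_{n,k}<\infty$ would give a basis of $E_0$, not of $E$, so no tension with the statement of the theorem arises; the true reason such bounds fail in general is that basis constants of $m$-dimensional spaces can grow with $m$. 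In short: the part you proved is the routine part, and the decisive argument is missing.
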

\begin{proof}
Fix a fundamental sequence of seminorms, $\left| \cdot \right|_1 \leq \left| \cdot \right|_2 \leq \ldots \leq \left| \cdot \right|_k \leq \left| \cdot \right|_{k+1} \leq \ldots $ in $E$. By assumption there is $\left(A_n\right)_n \subset L\left(E\right)$, dim$\left(A_n\left(E\right)\right) < \infty$ for each $n \in \N$, such that, $A_n \neq 0$ for each $n \in \N$, and $\lim_{ n \to \infty} \sum_{ p = 1}^n A_p \left(x\right) = x$ in $E$ for every $x \in E$.

We first select another (more suitable) fundamental sequence of seminorms. Set $E_p := A_p\left(E\right)$, $p =1,2, \ldots$ and $m_p :=$dim$\left(E_p\right)$, $p=1,2,\ldots$ with $m_0 := 0$.  Since dim$\left(E_p\right) < \infty$, for each $p$ there is $k\left(p\right) \in \N$ such that $k\left( p -1 \right) < k \left( p \right)$ and $\left| \cdot \right|_{k\left(p\right)}$  is a norm in $E_p$. We set $\left\|\cdot\right\|_n := \left| \cdot\right|_{ k\left(n\right)}$. Clearly, $\left\| \cdot\right\|_n \leq \left\| \cdot\right\|_{ n +1} $ for each $n$ and $\left(\left\|\cdot\right\|_n\right)_n$ is a fundamental sequence of seminorms in $E$.
Fix $n \in \N$ and for $j < n$, set $F_j^n := \left( \textrm{ Ker } \left\| \cdot\right\|_j \right) \cap E_n$. As $\left\| \cdot \right\|_j \leq \left\|\cdot\right\|_{j +1}$ for each $j$, we have $F_{n -1}^n \subset F_{n-2}^n \subset \ldots \subset F_1^n \subset E_n$. They are all closed in $E_n$ and, since they are finite dimensional, each one is complemented in the previous one.

We select a complement in each step $F_{n -2 }^n = F_{n - 1}^n \oplus H_{ n -2}^n$, $F_{n - 3 }^n = \left( F_{n - 1}^n \oplus H_{ n -2}^n \right) \oplus H_{ n -3}^n$ and $E_n = F_1^n \oplus H_0^n$. We can write $E_n = H_0^n \oplus H_1^n \oplus \ldots \oplus H_{n-2}^n \oplus F_{n-1}^n $. Selecting a element in each component and writing the projections, each  $x \in E_n$ can be uniquely written as $x = \sum_{ l = 0}^{n -1} \sum_{ k\left( \textrm{finite} \right)} x_k^l$.

Fix a seminorm $\left\|\cdot\right\|_j $ with $j < n$ and consider each projection $x_k^l$ of $x$. If $l \geq j$, $x_k^l \in H_l^n \subset F_l^n = \left( \right. $ Ker$\left. \left\| \cdot \right\|_l \right) \cap E_n $ then $x_k^l \in $ Ker$\left(\left\|\cdot\right\|_j\right)$ (i.e. $\left\| x_k^l \right\|_j = 0 $); therefore $\left\| \sum_{l = j}^{n-1} \sum_{ k \left( \textrm{finite} \right) } x_k^l \right\|_j = 0$.
On the other hand, $\left\|\cdot\right\|_j $ is a norm on $H_0^n \oplus \ldots \oplus H_{ j -1}^n $. This implies that the projection $\sum_{r=0}^{j-1} \sum_{ k \left( \textrm{finite} \right) } x_k^r \to x_k^l$ is continuous for $0 \leq l < j$ and each $k$. So we can find $C_j > 0$ such that
    \begin{equation*}\label{eq03}
    \left\| x_k^l \right\|_j \leq C_j \left\| \sum_{ r = 0}^{ j -1} \sum_{ k \left( \textrm{finite} \right)} x_k^r  \right\|_j = C_j \left\| \sum_{ r = 0}^{n -1 } \sum_{ k \left( \textrm{finite} \right) } x_k^r \right\|_j, \mbox{ for } 0 \leq l < j \mbox{ and each } k.
    \end{equation*}

Accordingly, we have found for $E_p$ and $m_p$ a family of 1-dimensional operators $B_j^p : E_p \rightarrow E_p $ with $j = 1, \ldots, m_p$ such that
    \begin{equation*}\label{eq04}
    e = \sum_{ j = 1}^{m_p} B_j^p e \mbox{ for every } e \in E_p,
    \end{equation*}
and
    \begin{equation*}\label{eq05}
    \max_{ 1 \leq j \leq m_p } \left\| B_j^p e \right\|_k \leq R_p \left\| e \right\|_k \mbox{ for every } e \in E_p \mbox{ and for every } k = 1, \ldots, p.
    \end{equation*}
In fact, $R_p = \max\left( C_1, \ldots, C_p \right) $, selected as above.

Now, since $\lim_{ n \to \infty} \sum_{ p = 1}^n A_p\left(x\right) = x $ for every $ x \in X$, the sequence $\left( \sum_{ p = 1}^n A_p \right)_{ n = 1}^{\infty} $ is equicontinuous in $L\left(E\right)$, this means that, for every $ k \in \N$, there exists $ M_k > 0 $ and $ l\left(k\right) \geq k$ such that $ \left\| \sum_{ p = 1}^n A_p\left(x\right) \right\|_k \leq M_k \left\| x \right\|_{l \left(k\right) } $ for every $ x \in X $ and for every $ n \in \N$. This implies
   \begin{equation*}\label{eq07}
   \left\| A_n\left(x\right) \right\|_k \leq \left\| \sum_{ p = 1}^n A_p\left(x\right) \right\|_k + \left\| \sum_{ p = 1}^{n -1} A_p \left(x\right) \right\|_k \leq 2 M_k \left\| x \right\|_{ l\left(k\right)} \mbox{ for each } n \in \N \mbox{ and } x \in E.
    \end{equation*}
For each $ p \in \N $ select $ N_p \in \N $ with $ m_p R_p \leq N_p $ and set $N_0 = 0$. Set $C_i^p := N_p^{-1} B_j^p $ with $i = rm_p + j$, $r= 0, 1, \ldots, N_p - 1$ and $j = 1, \ldots, m_p $. Observe that there are $N_p m_p $ rank-1 operators.
$$ \left.
    \begin{array}{lc}
    r = 0, & \frac{1}{N_p}B_1^p \ldots \frac{1}{N_p}B_{m_p}^p \\
    r = 1, & \frac{1}{N_p}B_1^p \ldots \frac{1}{N_p}B_{m_p}^p  \\
    \ldots & \ldots  \\
    r = N_p - 1,  & \frac{1}{N_p}B_1^p \ldots \frac{1}{N_p}B_{m_p}^p
    \end{array}
\right. $$
If $e \in E_p$, we get
   \begin{equation*}\label{eq09}
   \sum_{ i = 1}^{m_p N_p} C_i^p e = \sum_{ r = 0}^{N_p - 1} \sum_{ j = 1}^{ m_p } \frac{1}{N_p} B_j^p e = \frac{ 1}{N_p} \sum_{ r = 0}^{N_p -1 } e = e \mbox{ for every } p \in \N.
    \end{equation*}

Moreover, for $k = 1, 2, \ldots, p$ , $e \in E_p$, and $1 \leq q \leq m_p N_p$, we get $r$ and $w$ with $0 \leq r \leq N_p -1$, $1 \leq w \leq m_p$ such that
   \begin{equation*}\label{eq10}
   \sum_{ i = 1}^q C_i^p = r \sum_{ j = 1}^{m_p} N_p^{-1} B_j^p + \sum_{ j = 1}^w N_p^{-1} B_j^p.
    \end{equation*}
Thus, for $k = 1, 2, \ldots, p $ we have
   \begin{eqnarray*}\label{eq11}
   \left\| \sum_{ j = 1}^q C_i^p e \right\|_k & \leq & \frac{ r}{N_p} \left\| \sum_{ j = 1}^{m_p} B_j^p e \right\|_k + \frac{ 1}{N_p} \left\| \sum_{ j = 1}^w B_j^p e\right\|_k \leq  \frac{ r}{N_p} \left\| e \right\|_k `\frac{1}{N_p} \sum_{ j = 1}^w \left\| B_j^p e \right\|_k  \leq \nonumber \\
   & \leq & \left\| e \right\|_k + \frac{ 1}{N_p} \sum_{ j = 1}^w R_p \left\| e\right\|_k \leq \left( 1 + \frac{ w R_p }{ N_p } \right) \left\| e \right\|_k \leq 2\left\| e \right\|_k, \nonumber
    \end{eqnarray*}
where $\frac{ w R_p }{N_p } \leq 1$ since $1 \leq w \leq m_p$ and $m_p R_p \leq N_p $. We then obtain
   \begin{equation*}\label{eq12}
   \max_{ 1 \leq q \leq m_p N_p } \left\| \sum_{ i =1 }^q C_i^p e\right\|_k \leq 2 \left\| e \right\|_k \mbox{ for every } e \in E_p \mbox{ and } k= 1, \ldots, p.
    \end{equation*}

Define now $\widetilde{A_s} := C_i^p A_p $ for $s = m_0 N_0 + \ldots + m_{p - 1} N_{ p -1} + i$, $p = 1, 2, \ldots$ and $i = 1, 2, \ldots, m_p N_p$. Observe that $ \widetilde{A_s} \in L\left(E\right)$ since $ E \overset{A_p}{\rightarrow} E_p \overset{C_i^p }{\rightarrow} E_p \hookrightarrow E$.

\begin{claim}
$\left( \sum_{ s = 1}^n \widetilde{A_s} \right)_{ n = 1}^{\infty} $ is equicontinuous in $L\left(E\right)$.
\end{claim}

            If $n \geq m_1 N_1 $ there are $t$, $q$ with $ 1 \leq q \leq m_{ t +1} N_{t + 1}$ such that
               \begin{equation*}\label{eq13}
               \sum_{ s = 1}^n \widetilde{A_s} = \sum_{ p = 1}^t \sum_{ i = 1}^{m_p N_p} C_i^p A_p\left(x\right) + \sum_{ i=1}^q C_i^{t+1} A_{t+1}.
                \end{equation*}

            Fix $k \in \N$; for $x \in E$ we have, if $k \leq t+1$,
            \begin{eqnarray*}\label{eq14}
               \left\| \sum_{ s = 1}^n \widetilde{A_s}\left(x\right) \right\|_k & \leq & \left\| \sum_{ p = 1}^t \sum_{ i = 1}^{m_p N_p} C_i^p A_p\left(x\right)\right\|_k + \left\| \sum_{ i = 1}^q C_i^{t + 1} A_{t + 1}\left(x\right) \right\|_k = \nonumber \\
               & = & \left\| \sum_{ p = 1}^t A_p\left(x\right) \right\|_k + 2 \left\|A_{ t+1}\left(x\right) \right\|_k \leq \nonumber \\
               & \leq & M_k \left\| x \right\|_{l \left( k \right) } + 4M_k \left\|x\right\|_{l \left(k\right)} = 5M_k \left\| x \right\|_{l\left(k\right)}.
               \end{eqnarray*}

            And the claim follows, since this estimates holds for all $n$ such that $k \leq t + 1$, hence for all except a finite number. Consequently, we have
            \begin{equation*}\label{eq15}
            \forall k \in \N, \exists \omega\left(k\right), K_k > 0 : \sup_{ n } \left\| \sum_{ s = 1}^n \widetilde{A_s}\left(x\right) \right\|_k \leq K_k \left\| x \right\|_{\omega\left(k\right)} \mbox{ for every } x \in E.
            \end{equation*}

\begin{claim}
$\lim_{ n \to \infty} \sum_{ s = 1}^n \widetilde{A_s} \left(x\right) = x $ for every $ x \in E$.
\end{claim}

            First, select $t \in \N$ and $q$ with $1 \leq q \leq m_{ t +1} N_{t + 1} $, for $n \geq m_1 N_1$ then
            \begin{eqnarray*}\label{eq16}
            \left\| \sum_{ s = 1}^n \widetilde{A_s}\left(x\right) - x \right\|_k & \leq & \left\| \sum_{ p = 1}^t \sum_{ i = 1}^{ m_p N_p } C_i^p A_p\left(x\right) - x \right\|_k + \left\| \sum_{ i = 1}^q C_i^{t+1} A_{t + 1}\left(x\right) \right\|_k \leq \nonumber \\
            & \leq & \left\| \sum_{ p = 1}^t A_p\left(x\right) - x \right\|_k + 2\left\| A_{t+1}\left(x\right) \right\|_k \mbox{ if } k \leq t + 1.
            \end{eqnarray*}

            Now,
            \begin{equation*}\label{eq17}
            A_{ t +1}\left(x\right) = \left( \sum_{ r = 1}^{t + 1} A_r\left(x\right) - x \right) - \left( \sum_{ r = 1}^t A_r\left(x\right) - x \right),
            \end{equation*}
            where the two expressions tends to 0 as $t$ tends to infinity. Then, $\lim_{ t \to \infty} \left\| A_{ t + 1}\left(x\right) \right\|_k = 0$. As $ n $ tends to infinity, then $t$ tends also to infinity, therefore, there exists
            \begin{equation*}\label{eq18}
            \lim_{ n \to \infty} \left\| \sum_{ s = 1 }^n A_s\left(x\right) - x \right\|_k = \lim_{ t \to \infty} \left\| \sum_{ p = 1}^t A_p\left(x\right) - x \right\|_k + \lim_{ t \to \infty} \left\| A_t \left(x\right) \right\|_k = 0
            \end{equation*}

Denote by $E_0 := \left\{ y = \left(y\left(s\right)\right)_{s \in \N} : y\left(s\right) \in \widetilde{A_s}\left(E\right)  \textrm{ and } \sum_{ s = 1}^{\infty}y\left(s\right) \textrm{ converges in } E \right\}$, endowed with the fundamental system of seminorms
\begin{equation*}\label{eq19}
\n \left(y\left(s\right)\right)_s \n_k := \sup_n \left\| \sum_{ s = 1}^n y\left(s\right) \right\|, y = \left( y \left(s\right)\right)_s \in E_0.
\end{equation*}
It is not difficult to prove that $E_0$ is a Fr\'echet space. We prove that $E_0$ has a Schauder basis.

Since dim$\left(\widetilde{A_s}\left(E\right)\right) = 1$ for each $s \in \N$, we choose $y_s \in \widetilde{A_s}\left(E\right)$, $y_s \neq 0$, for each $s \in \N$. For each $y \in \widetilde{A_s}\left(E\right)$ there is $c \in \K$ such that $y = c y_s$.
Given $s \in \N$, define $e_s := \left(\widehat{y}\left(t\right)\right)_{t \in \N}$ by $\widehat{y}\left(t\right) = 0$ if $t \neq s$ and $\widehat{y}_s\left(s\right) = y_s$. It is easy to see that $\overline{\textrm{span}\left(e_s, s \in \N\right)} = E_0$. Moreover, $ \n  \sum_{ s = 1}^n c_s e_s \n_k \leq \n \sum_{s=1}^{n+1} c_s e_s \n_{k}$, for each $n, k \in \N$ and each $c_1, \ldots, c_{n+1} \in \K$. Then Theorem 14.3.6 in \cite[p. 298]{006} implies that $\left\{ e_s \right\}_{s \in \N} $ is a Schauder basis of $X_0$.

Now define $I: E \rightarrow E_0$ by $I\left(x\right) := \left( \widetilde{A_s}\left(x\right)\right)_{s \in \N}$. Since $ x = \sum_{s = 1}^{\infty} \widetilde{A_s}\left(x\right)$ in $E$, it follows that $I\left(x\right)$ is well-defined. Moreover, $I$ and $I^{-1}: I\left(E\right) \rightarrow E$ are continuous by the estimates $\n I\left(x\right) \n_k \leq K_k \left\| x \right\|_{w_k}$ (that were proved when we showed the equicontinuity of $\left( \sum_{s = 1}^n \widetilde{A_s}\right)_n$ ) and
\begin{equation*}\label{eq20}
\left\| x \right\|_k = \left\| \lim_{n \to \infty} \sum_{s = 1}^n \widetilde{A_s}\left(x\right) \right\|_k \leq \sup_n \left\| \sum_{s = 1}^n \widetilde{A_s}\left(x\right) \right\|_k = \n I\left(x\right) \n_k.
\end{equation*}
Observe that $I^{-1}\left(\left(y\left(s\right)\right)_s\right) = \sum_s y\left(s\right)$, if $\left(y\left(s\right)\right)_s \in I\left(E\right)$.

Finally, we define a projection $L: E_0 \rightarrow I\left(E\right)$ by $L\left(\left(y\left(s\right)\right)_s\right) := \left( \widetilde{A_s}\left( \sum_{ t = 1}^{\infty} y\left(t\right) \right) \right)_s$. To check that $L$ is continuous, if $\sum_{ s = 1}^{\infty} y\left(s\right)$ converges in $E$, using
\begin{equation*}\label{eq21}
\left\| \sum_{ s = 1}^{\infty} y\left(s\right) \right\|_l = \lim_{n \to \infty} \left\| \sum_{ s = 1}^n y\left(s\right) \right\|_l \leq \sup_n \left\| \sum_{s = 1}^n y\left(s\right) \right\|_l = \n \left(y\left(s\right)\right)_s \n_l,
\end{equation*}
for each $l \in \N$ and for each $y \in E_0$, then
\begin{equation*}\label{eq22}
\n \left( A_s \left( \sum_{ t = 1}^{\infty} y\left(t\right)\right)\right)_s \n_k = \sup_n \left\| \sum_{s=1}^{n} \widetilde{A_s}\left( \sum_{ t = 1}^{\infty} y\left(t\right) \right) \right\|_k \leq K_k \left\| \sum_{ t = 1}^{\infty} y\left(t\right) \right\|_{\omega_k} \leq K_k \n \left(y\left(s\right)\right)_s \n_{ \omega_k }.
\end{equation*}
Finally, $L^2 = L$, since $\sum_{ n = 1}^{\infty} \widetilde{A_n} \left(z\right) = z$ for every $z \in E$.
\end{proof}

\section{Extension of injective maps. Vogt's Example of a nuclear Fr\'echet space without the BAP}

In this section we present Vogt's counterexample \cite{3} of a nuclear Fr\'echet space which does not satisfy the bounded approximation property. Some results on the extension of injective continuous linear maps between normed spaces are needed first.

\vspace{.3cm}

Let $E, F$ be two normed spaces and let $T \in L\left(E, F\right)$. We denote $\widehat{E}, \widehat{F}$ the completion of $E, F$ respectively. We know that $T: E \rightarrow \widehat{F}$ is a continuous map. There exists a unique continuous linear map $\widehat{T} : \widehat{E} \rightarrow \widehat{F}$  such that the restriction $\left.\widehat{T}\right|E$ of $\widehat{T}$ to $E$ coincides  $T$. It is defined as $\widehat{T}\left(x\right) := \lim_{j \to \infty} T\left(x_j\right)$, with $\left(x_j\right)_j \subset E$ and $x_j \to x$ in $\widehat{E}$ as $j \to \infty$. In general, $\widehat{T}$ need not to be injective.

\begin{eje}
Let $\left( X, \left\| \cdot \right\| \right)$ be an infinite dimensional Banach space. Take $u \in X^{\ast} \backslash X'$ ( i.e. $u:X \rightarrow \K$ is a non-continuous linear form ), and define $\n x \n := \left| u\left(x\right) \right| + \left\| x \right\|$ (observe that $\left\|\cdot\right\| \leq \n \cdot \n$ in $X$). Clearly the identity $T: \left( X, \n \cdot \n \right) \rightarrow \left(X, \left\| \cdot \right\| \right)$ is continuous and injective. Then, there exists a unique continuous linear map $\widehat{T}: \widehat{\left( X, \n \cdot \n \right)} \rightarrow \left( X, \left\| \cdot \right\| \right)$ such that $\left. \widehat{T} \right|X = T$. Clearly $T$ is surjective since $\widehat{T}\left(X\right) = T\left(X\right) = X \subset \widehat{T}\left(\widehat{\left(X, \n \cdot \n \right)}\right)$.
Assume that $T$ is injective. By the closed graph theorem, $\left(\widehat{T}\right)^{-1}$ would be continuous. This would imply $\n x \n = \left| U\left(x\right) \right| + \left\| x \right\|  \leq C\left|x\right|, $ for every $x \in X$, then $u \in X'$, a contradiction.
\end{eje}

\begin{prop}\label{inj}
Let $X, Y$ be normed spaces and let $A: \left(X, \left\|\cdot\right\|\right) \rightarrow \left(Y, \n \cdot \n\right)$ be a continuous injective linear operator. The unique continuous linear extension $\widehat{A}: \widehat{\left(X, \left\|\cdot\right\|\right)} \rightarrow \widehat{\left(Y, \n \cdot \n\right)}$ of $A$ is injective if and only if for every $\left(x_j\right)_j \subset X$, which is $\left\| \cdot \right\|$-Cauchy in $X$ such that $\lim_{ j \to \infty} \n Ax_j \n = 0$ we have $\lim_{j \to \infty} \left\| x_j \right\| = 0$.
\end{prop}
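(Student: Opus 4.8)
The plan is to translate injectivity of $\widehat{A}$ into the triviality of its kernel and then to exploit the standard description of the completion $\widehat{(X,\|\cdot\|)}$ via $\|\cdot\|$-Cauchy sequences in $X$. The guiding observation is that every element $\xi$ of $\widehat{(X,\|\cdot\|)}$ arises as a limit $\xi = \lim_{j} x_j$ of some $\|\cdot\|$-Cauchy sequence $(x_j)_j \subset X$, that the extended norm on the completion satisfies $\|\xi\| = \lim_j \|x_j\|$, and that, by continuity of $\widehat{A}$ together with $\widehat{A}|_X = A$ (as recalled just before the statement), we have $\widehat{A}\xi = \lim_j A x_j$ computed in $\widehat{(Y,\n\cdot\n)}$. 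With these identifications the two conditions ``$\widehat{A}\xi = 0$'' and ``$\xi = 0$'' become, respectively, ``$\lim_j \n A x_j \n = 0$'' and ``$\lim_j \|x_j\| = 0$'', so the equivalence in the statement should fall out almost formally.

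For the direct implication I would assume $\widehat{A}$ injective and take any $\|\cdot\|$-Cauchy sequence $(x_j)_j$ in $X$ with $\lim_j \n A x_j \n = 0$. Completeness of $\widehat{(X,\|\cdot\|)}$ gives a limit $\xi$, and I would compute $\widehat{A}\xi = \lim_j A x_j = 0$ in $\widehat{(Y,\n\cdot\n)}$, the last equality because $\n A x_j \n \to 0$. Injectivity then forces $\xi = 0$, which is exactly $\lim_j \|x_j\| = 0$, once I recall that convergence to $\xi$ in the completion means $\|x_j - \xi\| \to 0$ and hence $\|x_j\| \to \|\xi\| = 0$.

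For the converse I would assume the sequential condition and show $\ker \widehat{A} = \{0\}$. Given $\xi$ with $\widehat{A}\xi = 0$, I would pick a $\|\cdot\|$-Cauchy sequence $(x_j)_j \subset X$ with $x_j \to \xi$; then $\lim_j A x_j = \widehat{A}\xi = 0$, i.e. $\lim_j \n A x_j \n = 0$, so the hypothesis yields $\lim_j \|x_j\| = 0$, that is $x_j \to 0$. Since also $x_j \to \xi$, uniqueness of limits gives $\xi = 0$. The only genuinely delicate points — and the place where I expect to have to be careful rather than to meet a real obstacle — are bookkeeping ones: distinguishing the norms on $X$, $Y$ and their completions (all written with the same symbols), justifying $\widehat{A}\xi = \lim_j A x_j$ straight from the definition of $\widehat{A}$, and noting that a sequence in $X$ converges to $0$ in $\widehat{(X,\|\cdot\|)}$ precisely when its $\|\cdot\|$-norms tend to $0$. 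No compactness, no closed graph theorem, and no appeal to the ambient structure beyond continuity and linearity of $A$ are needed.
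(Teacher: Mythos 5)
Your proof is correct and follows essentially the same route as the paper's: both directions are handled by representing elements of the completion $\widehat{(X,\|\cdot\|)}$ as limits of $\|\cdot\|$-Cauchy sequences from $X$, using continuity of $\widehat{A}$ together with $\widehat{A}|_X = A$ to identify $\widehat{A}\xi$ with $\lim_j Ax_j$, and invoking uniqueness of limits. The only difference is cosmetic: you present the two implications in the opposite order from the paper.
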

\begin{proof}
Let $\widehat{x} \in \widehat{\left(X, \left\| \cdot \right\|\right)}$ such that $\widehat{A}\widehat{x} = 0$ in $\widehat{\left(Y, \n \cdot \n\right)}$. Then, there exists a $\left( x_j \right)_j \subset X$, where $\left(x_j\right)_j$ is $\left\| \cdot \right\|$-Cauchy in $X$, such that $x_j \to \widehat{x}$ in $\widehat{\left(X, \left\| \cdot \right\|\right)}$. Using that $\widehat{A}$ is continuous $\widehat{A}x_j = Ax_j$ converges $\widehat{A}\widehat{x} = 0$ in $\widehat{\left(Y, \n \cdot \n\right)}$, hence $Ax_j \to 0$ in $\left(Y, \n \cdot \n \right)$ as $j \to \infty$. By assumption $x_j \to 0$ in $X$, hence $\widehat{x} = 0$. And $\widehat{A}$ is injective.

In order to show the converse, let $\left( x_j \right)_j $ be a Cauchy sequence in $X$ such that $\lim_{ j \to \infty} \n Ax_j \n = 0$. There is $ \widehat{x} \in \widehat{X}$ such that $x_j \to \widehat{x} $ in $\widehat{\left(X, \left\| \cdot \right\| \right)}$, thus $Ax_j \to \widehat{A}\widehat{x}$ in $\widehat{\left( Y, \n \cdot \n \right)}$. This implies $\widehat{A}\widehat{x} = 0$ and, since $\widehat{A}$ is injective, $\widehat{x} = 0$ . Therefore $\lim_{ j \to \infty} \left\| x_j \right\| = 0$.
\end{proof}

\begin{lem}
\textbf{(Vogt's main Lemma)} Let $E$ be a Fr\'echet space with a fundamental system of seminorms $\left( \left\| \cdot \right\|_k \right)_k$. Assume that $E$ has a continuous norm and the BAP. Then, there exists $k \left(0\right)$ such that for every $k \geq k\left(0\right)$ exists $l \geq k$ such that for every $\left(x_j \right)_j \subset E$ that is $\left\| \cdot \right\|_l$-Cauchy such that $\lim_{ j \to \infty} \left\| x_j \right\|_{ k \left(0\right)} = 0$ we have $\lim_{j \to \infty} \left\| x_j \right \|_k = 0$.
\end{lem}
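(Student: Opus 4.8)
The plan is to exploit the finite-rank operators furnished by the BAP together with the continuous norm, and to reduce Vogt's Lemma to the injectivity criterion of Proposition \ref{inj}. Since $E$ has the BAP, by the Proposition characterizing the BAP there is a sequence $(A_n)_n \subset L(E)$ with $\dim A_n(E) < \infty$ and $\sum_{n=1}^\infty A_n(x) = x$ for every $x \in E$; by Banach--Steinhaus this sequence is equicontinuous, so for each $k$ there are $M_k > 0$ and $l(k) \geq k$ with $\|\sum_{p=1}^n A_p(x)\|_k \leq M_k \|x\|_{l(k)}$ for all $n$ and all $x$. Because $E$ carries a continuous norm, I may assume every $\|\cdot\|_k$ is a genuine norm. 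The index $k(0)$ should be chosen so that $\|\cdot\|_{k(0)}$ dominates the continuous norm on $E$; the key consequence I want is that the partial-sum operators $S_n := \sum_{p=1}^n A_p$ are simultaneously bounded from $(E,\|\cdot\|_{l(k)})$ into $(E,\|\cdot\|_k)$, uniformly in $n$.

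\textbf{The main argument.} First I would fix $k \geq k(0)$ and set $l := l(k)$ from the equicontinuity estimate above. Now take any $(x_j)_j \subset E$ that is $\|\cdot\|_l$-Cauchy with $\lim_{j\to\infty}\|x_j\|_{k(0)} = 0$; the goal is $\lim_{j\to\infty}\|x_j\|_k = 0$. The idea is to write $\|x_j\|_k$ via the telescoping series $x_j = \lim_{n} S_n(x_j)$, so that
\begin{equation*}
\|x_j\|_k = \lim_{n\to\infty}\Big\|\sum_{p=1}^n A_p(x_j)\Big\|_k \leq \sup_n \Big\|\sum_{p=1}^n A_p(x_j)\Big\|_k .
\end{equation*}
I would split each partial sum into a ``head'' $\sum_{p=1}^{N}A_p(x_j)$ and a ``tail'' $\sum_{p=N+1}^{n}A_p(x_j)$, the cutoff $N$ chosen depending on the desired accuracy. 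For the tail, the uniform bound $\|\sum_{p=1}^n A_p(x_j)\|_k \leq M_k\|x_j\|_l$ controls contributions beyond $N$ in terms of the $\|\cdot\|_l$-norm, and the $\|\cdot\|_l$-Cauchy property keeps $\|x_j\|_l$ bounded uniformly in $j$; the tail of the convergent series can then be made small uniformly over the bounded set $\{x_j\}$ using equicontinuity on the precompact closure. For the head, which lives in the finite-dimensional space $E_1 + \cdots + E_N$ where all norms are equivalent, $\|\cdot\|_k$ is comparable to $\|\cdot\|_{k(0)}$, so $\|\sum_{p=1}^N A_p(x_j)\|_k \leq C_N \|x_j\|_{k(0)} \to 0$ as $j\to\infty$.

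\textbf{The main obstacle.} The delicate point is making the tail estimate genuinely uniform in $j$: I need the tail $\sup_j\|\sum_{p>N}A_p(x_j)\|_k$ to go to $0$ as $N\to\infty$, not merely for each fixed $j$. This is where I would invoke that $(x_j)_j$, being $\|\cdot\|_l$-Cauchy, is precompact in $(E,\|\cdot\|_l)$ together with the Consequence that the BAP implies the approximation property, i.e.\ $S_n \to I$ uniformly on precompact sets. Concretely, the relatively compact set $K := \overline{\{x_j : j \in \N\}}$ (closure in the $\|\cdot\|_l$-topology) admits, by uniform convergence of $(S_n)$ on $K$, an $N$ with $\sup_{x\in K}\|x - S_N(x)\|_k < \varepsilon$, and this bounds the tail uniformly over all $x_j$ at once. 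Combining the uniform tail bound with the head bound, which tends to $0$ in $j$, gives $\limsup_j \|x_j\|_k \leq \varepsilon$ for arbitrary $\varepsilon$, hence $\lim_j\|x_j\|_k = 0$, as required. The role of $k(0)$ is precisely to force the relevant heads to vanish under the hypothesis $\|x_j\|_{k(0)}\to 0$, and choosing it large enough that $\|\cdot\|_{k(0)}$ controls the continuous norm guarantees that the finite-dimensional comparison constants $C_N$ are finite.
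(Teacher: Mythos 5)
Your overall plan (finite-rank approximants, a head/tail split, norm equivalence on finite-dimensional ranges, an equicontinuity index $l=l(k)$) is in the right spirit, but there are two genuine gaps, and the first one sits exactly at the point the lemma is really about. Your choice of $k(0)$ --- ``large enough that $\left\|\cdot\right\|_{k(0)}$ dominates the continuous norm'' --- does not make the head estimate work. Equivalence of norms on the finite-dimensional space $G_N := A_1(E)+\cdots+A_N(E)$ only gives $\left\| S_N(x_j)\right\|_k \leq c_N \left\| S_N(x_j)\right\|_{k(0)}$, with $S_N$ still inside on the right; to pass to $C_N\left\| x_j\right\|_{k(0)}$ you would need $S_N$ to be bounded as an operator on $\left(E,\left\|\cdot\right\|_{k(0)}\right)$, and that is simply not available: $S_N$ is a finite-rank operator whose coefficient functionals are continuous on the Fr\'echet space $E$ but need not be $\left\|\cdot\right\|_{k(0)}$-continuous, and equicontinuity only bounds $\left\| S_N x\right\|_{k(0)}$ by $\left\| x\right\|_{l(k(0))}$ with $l(k(0))$ possibly much larger than $k(0)$. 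The correct definition of $k(0)$ (this is the paper's key move) is different: fix $k(1)$ such that $\left\|\cdot\right\|_{k(1)}$ is a norm, and let $k(0)\geq k(1)$ be the index produced by \emph{equicontinuity at level} $k(1)$, i.e. $\left\| S_n x\right\|_{k(1)} \leq C \left\| x\right\|_{k(0)}$ for all $n$ and $x$. Then the head estimate goes in two steps: first down to the norm level on the range, $\left\| S_N(x_j)\right\|_k \leq c_{N,k}\left\| S_N(x_j)\right\|_{k(1)}$ (finite-dimensional equivalence, legitimate because $\left\|\cdot\right\|_{k(1)}$ is a genuine norm), and then $\left\| S_N(x_j)\right\|_{k(1)} \leq C \left\| x_j\right\|_{k(0)}$ by equicontinuity. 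Merely dominating the continuous norm gives nothing of this kind, so as written the head term is unjustified and the proof does not close.

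The second gap is in the tail. The Consequence ``BAP implies $S_n \to I$ uniformly on precompact sets'' refers to precompact subsets of the Fr\'echet space $E$, whereas your set $\left\{ x_j \right\}$ is only totally bounded in the normed space $\left(E,\left\|\cdot\right\|_l\right)$; a $\left\|\cdot\right\|_l$-Cauchy sequence need not even be bounded in $E$ (nothing controls $\left\| x_j\right\|_m$ for $m>l$), so that Consequence does not apply, and the $\left\|\cdot\right\|_l$-closure of $\left\{x_j\right\}$ need not be compact either since $\left(E,\left\|\cdot\right\|_l\right)$ is incomplete. This gap is repairable: by equicontinuity the operators $I-S_n$ are uniformly bounded from $\left(E,\left\|\cdot\right\|_l\right)$ into $\left(E,\left\|\cdot\right\|_k\right)$ and tend to $0$ pointwise on $E$, hence uniformly on $\left\|\cdot\right\|_l$-totally bounded sets, by the standard normed-space argument. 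The paper avoids compactness altogether and is more elementary here: it anchors the estimate at a single $x_{j(0)}$ with $\left\| x_j - x_{j(0)}\right\|_l < \varepsilon$ for $j\geq j(0)$, picks one operator $A$ from the approximating family with $\left\| x_{j(0)} - A x_{j(0)}\right\|_k < \varepsilon$, and writes $\left\| x_j\right\|_k \leq \left\| x_j - x_{j(0)}\right\|_k + \left\| x_{j(0)} - A x_{j(0)}\right\|_k + \left\| A\left(x_{j(0)} - x_j\right)\right\|_k + \left\| A x_j\right\|_k$, controlling the last two terms by the (correctly established) estimate $\left\| A y\right\|_k \leq C_{A,k}\left\| y\right\|_{k(0)}$. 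If you repair the definition of $k(0)$ and replace the precompactness appeal by either of these two devices, your argument becomes a correct proof along essentially the paper's lines.
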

\begin{proof}
Let $\left(A_{\tau}\right)_{\tau \in T}$ be an equicontinuous net in $L\left(E\right)$ such that $A_{\tau}\left(E\right)$ is finite dimensional for every $\tau \in T$ and $A_{\tau}x$ converges to $x$ for each $x \in E$.

Let $\left\| \cdot \right\|_{k \left(1\right)}$ be a norm. Select $k\left(0\right) \geq k\left(1\right)$ and $C > 0$ such that $\left\| A_{\tau} x \right\|_{k\left(1\right)} \leq C\left\|x\right\|_{k\left(0\right)}$ for each $x \in E$ and each $\tau \in T$.

Since $\left\| \cdot \right\|_{k\left(1\right)}$ is a norm and $A_{\tau}\left(E\right)$ is finite dimensional, for each $\tau \in T$ and $k \in \N$ there is $C_{ \tau, k} > 0$ such that
\begin{equation*}\label{eq23}
\left\| A_{\tau} x \right\|_k \leq C_{ \tau, k } \left\|A_{\tau}x \right\|_{k\left(1\right)} \leq C_{\tau, k }C\left\|x\right\|_{k\left(0\right)}.
\end{equation*}
Fix now $k \geq k\left(0\right)$ and select $l \geq k$ and $D > 0$ such that $ \left\| A_{\tau} x \right\|_k \leq D \left\| x \right\|_l $ for every $x \in E$ and for every $\tau \in T$. Let $\left(x_j \right)_j \subset E$ be a $\left\| \cdot \right\|_l$-Cauchy sequence such that $\lim_{j \to \infty} \left\| x_j \right\|_{k \left(0\right)} = 0$.  Given $\varepsilon > 0$, choose $j\left(0\right) \in \N$ such that $\left\| x_j - x_{j\left(0\right)} \right\|_l < \varepsilon $ if $ j \geq j\left(0\right)$. Given $x_{j\left(0\right)} \in E$, select $\tau \in T$ such that $\left\| x_{j \left(0\right)} - A_{\tau}x_{j\left(0\right)} \right\|_k < \varepsilon$. For $j \geq j\left(0\right)$, we have
            \begin{eqnarray*}\label{eq24}
            \left\| x_j \right\|_k & \leq & \left\| x_j - x_{j\left(0\right)} \right\|_k + \left\| x_{j\left(0\right)} - A_{\tau}x_{j\left(0\right)}\right\| + \left\| A_{\tau}\left(x_{j\left(0\right)} - x_j \right) \right\| + \left\|A_{\tau}x_j \right\| \leq \nonumber \\
            & \leq & \left\| x_j - x_{j\left(0\right)} \right\|_l + \varepsilon + C_{\tau, k }C\left\|x_{j\left(0\right)} - x_j \right\|_{k\left(0\right)} + C_{\tau, k } C\left\|x_j\right\|_{k\left(0\right)} \leq \nonumber \\
            & \leq & \left(2\varepsilon + C_{\tau, k} C \varepsilon\right) + C_{\tau, k } C \left\| x_j \right\|_{k\left(0\right)}.
            \end{eqnarray*}
Selecting $j\left(1\right)  > j\left(0\right)$ with $ \left\|x_j\right\|_{k\left(0\right)} < \varepsilon$, we get $\left\|x_j\right\|_k \leq \left(2 + 2C_{\tau, k} \right) \varepsilon$
for all $j \geq j\left(1\right)$.
\end{proof}

\begin{eje} \label{vogtex}
\textbf{(Vogt's example)} Take $0 < \rho_{ \mu, \nu} \leq 1$, $\mu, \nu \in \N$, with $\lim_{ \mu \to 0} \rho_{ \mu, \nu} = 0$ for every $\nu \in \N$. Denote $x = \left( x_{ \mu, \nu}^{n} \right)_{ n, \mu, \nu} \in \K^{\N \times \N \times \N}$. For $p \in \N$, define
    \begin{equation*}
    \left\| x \right\|_p := \sum_{n, \mu, \nu \leq p} \left| x_{\mu, \nu}^n \right|p^{n + \mu + \nu} + \sum_{n, \mu, \nu > p} \left| \rho_{ \mu, \nu} x_{\mu, \nu}^n - x_{\mu, \nu}^{n+1}\right|p^{ n + \mu + \nu}.
    \end{equation*}
Set $E := \left\{ x = \left( x_{\mu, \nu}^n \right)_{n, \mu, \nu \in \N^3} : \left\| x \right\|_p < \infty \textrm{ for every } p \in \N \right\}$. It is a Fr\'echet space and
    \begin{equation*}
    \left\| x \right\|_p \leq 2 \left(\sum_{n, \mu, \nu \leq p+1} \left| x_{\mu, \nu}^n \right|p^{n + \mu + \nu} + \sum_{n, \mu, \nu > p+1} \left| \rho_{ \mu, \nu} x_{\mu, \nu}^n - x_{\mu, \nu}^{n+1}\right|p^{ n + \mu + \nu} \right) =: \left\| x \right\|_p'.
    \end{equation*}
To see this, use the inequality
    \begin{equation*}
    \sum_{n, \mu, \nu = p+1} \left| \rho_{ \mu, \nu} x_{\mu, \nu}^n - x_{\mu, \nu}^{n+1}\right|p^{ n + \mu + (p + 1)} \leq \sum_{n, \mu} \left( \left|  x_{\mu, p+1}^n \right| + \left| x_{\mu, p+1}^{n+1}\right| \right)p^{ n + \mu + (p + 1)}.
    \end{equation*}

The canonical map $ \left( E, \left\| \cdot \right\|_{p+1} \right) \mapsto \left( E, \left\| \cdot \right\|_p' \right)$ is nuclear, then $E$ is a nuclear space. Indeed,
    \begin{equation*}
    x = \sum_{ n, \mu, \nu} (e_{n, \mu, \nu} \otimes u_{n, \mu, \nu})(x),
    \end{equation*}
where $e_{n, \mu, \nu}$ are the canonical unit vectors in $E \subset \K^{\N^3}$ and $u_{n, \mu, \nu}$ are the canonical unit vectors in the dual. Exactly in the same positions we obtain
$$
    \left.
	\begin{array}{ll}
	 \left\| e_{n, \mu, \nu} \right\|_p' & = \left\{
	\begin{array}{l}
		p^{n + \mu + \nu}, \textrm{ or } \\
        \rho_{\mu, \nu} p^{n + \mu + \nu}
    \end{array}\right. \\
    \left\| u_{n, \mu, \nu} \right\|_{p+1} & =  \left\{
	\begin{array}{l}
		\frac{ 1}{\left(p + 1\right)^{n + \mu + \nu}}, \textrm{ or } \\
        \frac{1}{ \rho_{\mu, \nu} \left(p+1\right)^{n + \mu + \nu}}
    \end{array}\right.
    \end{array}\right.
\textrm{ then } \sum_{n, \mu, \nu} \left\| e_{n, \mu, \nu} \right\|_p' \left\| u_{n, \mu, \nu}\right\|_{p+1} < \infty. $$

Now, observe that $\left\| \cdot \right\|_p$ is a norm in $E$ for all $p$. Indeed, assume $x = \left( x_{\mu, \nu}^n \right) \in E$ satisfies $\left\| x \right\|_p = 0$ then $x_{ \mu, \nu}^n = 0$ for all $n, \mu$ if $\nu \leq p$. In that case, $\rho_{ \mu, \nu} x_{\mu, \nu}^{n+1}$ with $\nu > p$ for every $n, \mu$ then $x_{\mu, \nu}^n = \rho_{\mu, \nu}^n x_{\mu, \nu}^{n+1}$ with $\nu > p$ for every $ \mu \in \N$.
Suppose there are $\mu \in \N$ and $\nu > p$ such that $x_{\mu, \nu}^1 \neq 0$. Select $q \in \N$ with $q \rho_{\mu, \nu} > 1$ and $q > \nu$. Then, since $ x \in E$,
    \begin{equation*}
   \infty  \overset{ q \geq \nu}{ \geq}  \left\| x \right\|_q \leq \sum_{n} \left| x_{ \mu, \nu}^n \right| q^{n + \mu + \nu} = \sum_n \left(\rho_{\mu, \nu}\right)^{n-1} \left| x_{\mu, \nu}^1 \right| q^{n + \mu, \nu} = \left|x_{\mu, \nu}^1\right| \sum_n \left(\rho_{\mu, \nu} q\right)^{n-1} q^{\mu + \nu + 1} = \infty,
    \end{equation*}
and this is a contradiction. Then $x_{\mu, \nu}^1 = 0$ for each $\mu$ and each $\nu > p$ implies that $x_{\mu, \nu}^n = 0$ for every $\mu, n \in \N$ and for every $\nu > p$. Therefore $ x = 0$.

To prove  that $E$ does not have the BAP, we use Vogt's main lemma and we will prove for every $p_0$ and for every $q \geq p = p_0 + 1$ there exists $\left(x_m\right)_m \subset E$, which is $\left\| \cdot \right\|_q$-Cauchy, with $\left\| x_m \right\|_{p_0}$ converging 0 as $m$ tends to infinity, but $\left\| x_m \right\|_p$ does not converge to 0. To prove this fact, given $q \geq p_0 +1$, select $\mu \in \N$ such that $\rho_{\mu, p } q < 1$. Define $x_m := \sum_{ n = 1}^m \left(\rho_{\mu, p}\right)^n e_{ n, \mu, p}$, where $e_{n, \mu, \nu}$ are the canonical unit vectors in $E$. For $l < m$ we get
    \begin{equation*}
    \left\| x_m - x_l \right\|_q = \left\| \sum_{ n = l  + 1}^{m} \left( \rho_{\mu, p} \right)^n e_{n, \mu, p} \right\|_q  \overset{q \geq p}{=} \sum_{l +1}^m \rho_{\mu, p}^n q^{n + \mu + p} = q^{\mu + p} \sum_{ l + 1}^m \left( \rho_{\mu, p} q\right)^n .
    \end{equation*}
And $\left(x_m\right)_m$ is $\left\| \cdot \right\|_q$-Cauchy, since $\rho_{\mu, p} q < 1$. On the other hand,
    \begin{equation*}
    \left\| x_m \right\|_{p_0} = \sum_{ n = 1}^m \left| \rho_{\mu, p} x_{\mu, p}^n - x_{\mu, p}^{n+1} \right| p_0^{n + \mu + p},
    \end{equation*}
where, for $n= 1, \ldots, m-1$, $\rho_{\mu, p} x_{\mu, p}^n - x_{\mu, p}^{n +1} = \rho_{\mu, p} \rho_{\mu, p}^n - \rho_{\mu,p}^{n +1} = 0$, therefore
    \begin{equation*}
    \left\| x_m \right\|_{p_0} = \rho_{\mu, p}^{m +1} p_0^{m + \mu + p} = p_0^{\mu + p - 1} \left( \rho_{\mu, p} p_0 \right)^{m + 1} \to 0 \textrm{ as } m \to \infty,
    \end{equation*}
since $\rho_{\mu, p} p_0 < \rho_{\mu, p} q < 1$.

Finally,
    \begin{equation*}
    \left\| x_m \right\|_{p} = \sum_{ n = 1}^m  \rho_{\mu, p}^{n} p^{n + \mu + p} = p^{\mu + p} \sum_{ n = 1}^m \left( \rho_{\mu, p} p \right)^{n} \geq p^{\mu + p + 1}\rho_{\mu, p} \textrm{ for every } m \in \N.
    \end{equation*}
Therefore $\left\| x_m \right\|_p$ does not converge to 0 as $m$ tends to infinity.
\end{eje}

\section{Countably normable Fr\'echet Spaces}

\begin{defn}
A Fr\'echet space $E$ is \textit{countably normable (or countably normed)} if there exists a fundamental sequence of norms $ \left( \left\| \cdot \right\|_k \right)_k $ defining the topology of $E$ such that the inclusions $i_k : \left( E, \left\| \cdot \right\|_{ k +1} \right) \rightarrow \left( E, \left\| \cdot \right\|_k \right) $ can be extended (uniquely) to an injection $ \varphi_k : \widehat{\left( E, \left\| \cdot \right\|_{ k +1} \right)} \rightarrow \widehat{\left( E, \left\| \cdot \right\|_k \right)}$ (i.e. $E$ is an intersection of Banach spaces).
\end{defn}

The following result is a consequence of proposition \ref{inj}.

\begin{lem}
  Let $X$ be a vector space with two norms $\left\| \cdot \right\| \leq \n \cdot \n$. The inclusion $i:\left(X, \n \cdot \n \right) \rightarrow \left( X, \left\| \cdot \right\| \right)$ extends uniquely to an injective continuous map if and only if for every $\left( x_n \right)_n \subset X$, which is $\n \cdot \n$-Cauchy, such that $\left\| x_n \right\| \to 0$ as $n$ tends to infinity then $\n x_n \n \to 0 $.
\end{lem}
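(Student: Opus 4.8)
The plan is to deduce this statement directly from Proposition \ref{inj}, the only genuine subtlety being that the two results use the symbols $\left\|\cdot\right\|$ and $\n\cdot\n$ with their roles interchanged, so I must fix a careful dictionary between them. In Proposition \ref{inj} the source norm is $\left\|\cdot\right\|$ and the target norm is $\n\cdot\n$, whereas in the present Lemma the inclusion $i$ runs from the \emph{stronger} norm $\n\cdot\n$ to the \emph{weaker} norm $\left\|\cdot\right\|$. Accordingly I would apply Proposition \ref{inj} with $Y := X$, with the role of Proposition \ref{inj}'s source norm played by $\n\cdot\n$, the role of its target norm played by $\left\|\cdot\right\|$, and with $A := i$ the inclusion (identity) map.

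First I would verify the hypotheses of Proposition \ref{inj} under this identification. The map $i$ is injective because it is the identity on the underlying set $X$ and $\left\|\cdot\right\|$ is a genuine norm, so $i(x) = 0$ forces $x = 0$. It is continuous because the assumption $\left\|\cdot\right\| \leq \n\cdot\n$ gives $\left\| i(x)\right\| = \left\| x\right\| \leq \n x\n$ for every $x \in X$, so $i$ has operator norm at most $1$. The completions involved, $\widehat{\left(X,\n\cdot\n\right)}$ and $\widehat{\left(X,\left\|\cdot\right\|\right)}$, are exactly those appearing in the notion of the unique continuous extension of $i$; hence the extension whose injectivity is to be characterized is precisely the map denoted $\widehat{A}$ in Proposition \ref{inj}, and its existence and uniqueness are automatic.

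With these identifications in place, Proposition \ref{inj} asserts that $\widehat{i}$ is injective if and only if every sequence $\left(x_n\right)_n \subset X$ that is Cauchy for the source norm $\n\cdot\n$ and satisfies $\left\| i(x_n)\right\| = \left\| x_n\right\| \to 0$ also satisfies $\n x_n\n \to 0$. This is word for word the condition in the statement of the Lemma, so both implications of the desired equivalence are furnished at once. I expect the only real work to be this bookkeeping of which norm plays which role; once the dictionary is fixed there is nothing further to prove, and the main point to guard against is simply an accidental swap of the two norms when reading off the criterion of Proposition \ref{inj}.
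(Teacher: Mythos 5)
Your proposal is correct and is exactly the paper's route: the paper states this Lemma as an immediate consequence of Proposition \ref{inj}, and your argument simply spells out that deduction, applying the proposition with $Y = X$, $A = i$, and the roles of the two norms interchanged (source norm $\n\cdot\n$, target norm $\left\|\cdot\right\|$). Your careful bookkeeping of which norm plays which role, together with the observation that injectivity and continuity of $i$ follow from $\left\|\cdot\right\| \leq \n\cdot\n$, fills in precisely the details the paper leaves implicit.
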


\begin{remark}
A Fr\'echet space $E$ with a continuous norm is countably normable if and only if there exists a fundamental system $\left( \left\| \cdot \right\|_k \right)_k$ of norms on $E$ such that  for every $k \in \N$ there exists $j > k $ such that if $\left(x_n \right)_n \subset X$ is $\left\| \cdot \right\|_j$-Cauchy and $\lim_n \left\| x_n \right\|_k = 0$,  then $\lim_n \left\| x_n \right\|_j = 0$.

Indeed, if we suppose that $E$ is countably normable then it is enough to take $j = k + 1$. If the condition is satisfied, it is enough to pass to a subsequence.

As a consequence, if $E$ is a Fr\'echet space which is countably normable and $F \subset E$ is a closed subspace, then $F$ is a countably normable Fr\'echet space. To prove it we just take the restriction to $F$ of the norms given by the remark on $E$.
\end{remark}

\begin{prop}
Every Fr\'echet space $E$ with a Schauder basis and a continuous norm is countably normable.
Consequently, every separable Fr\'echet space with a continuous norm and the bounded approximation property is countably normable.
\end{prop}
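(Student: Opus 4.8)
The plan is to treat the two assertions separately: the first by constructing a fundamental system of norms adapted to the Schauder basis, and the second by combining Pe\l czy\'nski's Theorem with the stability of countable normability under closed subspaces recorded in the preceding Remark.

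First I would set up the basis norms. Since $E$ has a continuous norm, I may assume the given fundamental system $\left(\left\|\cdot\right\|_k\right)_k$ consists of norms. Let $\left(x_n\right)_n$ be the Schauder basis with coefficient functionals $\left(x_n'\right)_n$, and let $P_n(x):=\sum_{j=1}^n x_j'(x)x_j$ be the canonical projections; as $E$ is barrelled, $\left(P_n\right)_n$ is equicontinuous. Following exactly the construction of the $q_k$ in the earlier Remark, I define
\[
q_k(x):=\sup_{n}\left\|P_n x\right\|_k .
\]
The equicontinuity of $\left(P_n\right)_n$ gives $\left\|\cdot\right\|_k\le q_k\le C_k\left\|\cdot\right\|_{l(k)}$, so $\left(q_k\right)_k$ is again a fundamental system, and since $q_k\ge\left\|\cdot\right\|_k$ each $q_k$ is a norm. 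The two structural features I will exploit are monotonicity of the partial sums, $q_k(P_n x)\le q_k(P_m x)\le q_k(x)$ for $n\le m$, and the $q_k$-continuity of every coefficient functional, which follows from $\left\|x_j'(x)x_j\right\|_k=\left\|P_j x-P_{j-1}x\right\|_k\le 2q_k(x)$ together with $\left\|x_j\right\|_k>0$.

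Next I would verify the criterion for countable normability. By the Lemma following Proposition \ref{inj}, it suffices to show that for each $k$ the choice $j=k+1$ works, i.e.\ whenever $\left(y_m\right)_m\subset E$ is $q_{k+1}$-Cauchy with $q_k(y_m)\to 0$, then $q_{k+1}(y_m)\to 0$. From $q_k(y_m)\to 0$ and the $q_k$-continuity of the $x_j'$, every coordinate satisfies $x_j'(y_m)\to 0$, so for a fixed $N$ the finite sum gives $q_{k+1}(P_N y_m)\to 0$ as $m\to\infty$. For the tails, fix $\varepsilon>0$, use the Cauchy property to pick $M$ with $q_{k+1}(y_m-y_M)<\varepsilon$ for $m\ge M$, and then pick $N$ with $q_{k+1}(y_M-P_N y_M)<\varepsilon$, which is possible because the basis expansion of $y_M$ converges in $E$. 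Writing $P_n y_m-P_N y_m=(P_n-P_N)y_M+(P_n-P_N)(y_m-y_M)$ and applying monotonicity to each summand bounds $q_{k+1}(y_m-P_N y_m)\le 3\varepsilon$ uniformly for $m\ge M$. Combining the two estimates yields $\limsup_m q_{k+1}(y_m)\le 3\varepsilon$, and since $\varepsilon$ was arbitrary, $q_{k+1}(y_m)\to 0$. This proves the first assertion.

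Finally, for the consequence, let $E$ be a separable Fr\'echet space with a continuous norm and the BAP. By Pe\l czy\'nski's Theorem, $E$ is isomorphic to a complemented — hence closed — subspace of a Fr\'echet space $E_0$ with a Schauder basis, and because $E$ carries a continuous norm, $E_0$ may be chosen with a continuous norm. The first part shows $E_0$ is countably normable; by the Remark a closed subspace of a countably normable Fr\'echet space is again countably normable, and countable normability is clearly preserved under isomorphism. Hence $E$ is countably normable. The hard part will be the uniform control of the basis tails $q_{k+1}(y_m-P_N y_m)$ across all $m$, which is exactly where the $q_{k+1}$-Cauchy hypothesis is indispensable; notably the hypothesis $q_k(y_m)\to 0$ enters only to force the coordinatewise limits to vanish.
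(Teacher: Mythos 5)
Your proposal is correct, and its core verification takes a genuinely different route from the paper's, even though both start from the same renorming: your $q_k(x)=\sup_n\left\|P_nx\right\|_k$ is exactly the paper's $\left|\cdot\right|_k$, and your handling of the ``Consequently'' part (Pe\l czy\'nski's Theorem plus heredity of countable normability for closed subspaces from the preceding Remark) is the intended one. The difference lies in how the injectivity of the extensions $\widehat{\left(E,q_{k+1}\right)}\rightarrow\widehat{\left(E,q_k\right)}$ is established. The paper works inside the completions: it extends each rank-one basis operator $A_j$ to an operator $A_j^k$ on $\widehat{\left(E,\left|\cdot\right|_k\right)}$, checks that the partial sums $\sum_{j\leq n}A_j^k$ remain equicontinuous there and converge pointwise to the identity, so that every $\widehat{y}$ in the completion has a unique expansion $\widehat{y}=\sum_n A_n^{k+1}\widehat{y}$, and then gets injectivity of $\varphi_k$ by matching expansions: $A_n^{k+1}\widehat{y}=A_n^k\left(\varphi_k\widehat{y}\right)=0$ for all $n$ forces $\widehat{y}=0$. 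You never leave $E$: you invoke the sequential criterion (the Lemma derived from Proposition \ref{inj}) and show directly that a $q_{k+1}$-Cauchy sequence with $q_k\left(y_m\right)\rightarrow0$ has $q_{k+1}\left(y_m\right)\rightarrow0$, using $q_k$-continuity of the coefficient functionals to kill the finite blocks $P_Ny_m$ and an $\varepsilon/3$ tail estimate, based on the Cauchy hypothesis and the convergence of the basis expansion of the single element $y_M$, to control $q_{k+1}\left(y_m-P_Ny_m\right)$ uniformly in $m$. Your argument is more elementary and shorter: it needs no extension of operators, no density or equicontinuity arguments on completions. The paper's argument costs more but yields extra structural information, namely that the Schauder decomposition survives, with uniqueness, in each local Banach space $\widehat{\left(E,\left|\cdot\right|_k\right)}$; this is the sort of statement that is reusable (for instance it makes the injectivity of all compositions $\varphi_k\circ\ldots\circ\varphi_{k'}$ transparent). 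All your individual steps check out, including the two facts you isolate ($\left\|P_nz\right\|_k\leq q_k\left(z\right)$ and $\left|x_j'\left(z\right)\right|\leq 2q_k\left(z\right)/\left\|x_j\right\|_k$, the latter valid because the $\left\|\cdot\right\|_k$ are norms), so there is no gap to report.
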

\begin{proof}
Let $\left(x_n \right)_n$ be a Schauder basis in $E$ with coefficient functionals $\left(x_n'\right)_n$. We write
$$\left.
	\begin{array}{crl}
		A_n: & E   \longrightarrow  &  E \\
		 & x \longrightarrow & \langle x_n', x\rangle x_n.
	\end{array}
\right. $$
We have dim$\left(A_n\left(E\right)\right) = 1$, $A_n A_m = \delta_{n, m} A_n$ if $n \neq m$ and $ x = \sum_{ n = 1}^{\infty} A_n\left(x\right) = \sum_{ n = 1}^{\infty} \langle x_n', x \rangle x_n $ converging in $E$ for every $ x \in E$.

Given a fundamental sequence of norms $\left(\left\| \cdot \right\|_k \right)_{k \in \N}$ in $E$, define $\left| y \right|_k := \sup_n \left\| \sum_{ i = 1}^n A_i y \right\|_k$ for every $y \in E$ and $k \in \N$. Then $\left( \left| \cdot \right|_k \right)_k $ is a fundamental sequence of seminorms in $E$. Indeed,
    \begin{equation*}
    \left\| x \right\|_k = \lim_{ n \to \infty} \left\| \sum_{ i = 1}^n A_i \left(x\right) \right\|_k \leq \sup_n \left\| \sum_{ i = 1}^n A_i\left(y\right) \right\|_k = \left| x \right|_k,
    \end{equation*}
for every $k \in \N$ and for every $x \in E$. In particular $\left| \cdot \right|_k$ is a norm for each $k$. On the other hand, since $\left( \sum_{ i = 1}^n A_i \right)_n $ is equicontinuous in $L\left(E\right)$ by Banach-Steinhaus' Theorem, for every $k \in \N$, there exists $l\left(k\right) > k$ and $C_k > 0$ such that $\left\| \sum_{i = 1}^n A_i\left(x\right)\right\|_k \leq C_k \left\| x \right\|_{ l \left(k \right)}$ for every $n \in \N$ and for every $x \in E$. This implies that for every $k \in \N$ there exists $l\left(k\right) > k$ and $C_k > 0$ such that $\left| x \right|_k \leq C_k \left\|x\right\|_{l\left(k\right)}$.

Consider that map $A_j : \left( E, \left| \cdot \right|_k \right) \rightarrow \left( E, \left| \cdot \right|_k \right) $ between these normed spaces. It is continuous
    \begin{equation*}
    \left| A_j\left(y\right) \right|_k = \sup_n \left\| \sum_{ i = 1}^n A_i A_j \left(y\right) \right\|_k = \left\| A_j\left(y\right) \right\|_k  \leq \left\| \sum_{ i = 1}^j A_j\left(y\right) - \sum_{ i = 1}^{j - 1} A_j\left(y\right) \right\|_k \leq 2\left|y\right|_k,
    \end{equation*}
and if $j = 1$ then $\left| A_1\left(y\right) \right|_k \leq \left| y \right|_k $. Then there exists a unique continuous extension $A_j^k : \widehat{\left( E, \left| \cdot \right|_k \right)} \rightarrow \widehat{\left( E, \left| \cdot \right|_k \right)}$ such that $\left. A_j^k \right|E = A_j$ for each $j \in \N$. Moreover, since $A_j^k\left(E\right)$ is finite dimensional (in fact 1-dimensional), it is closed and $A_j^k \left[ \widehat{\left(E, \left| \cdot \right|_k \right)} \right] \subset A_j\left(E\right) = $span$\left(x_j\right)$. Since $A_i A_j = \delta_{i, j } A_j$ on $E$, by density $A_i^k A_j^k = \delta_{i, j} A_i^k$. We show now that $\left( \sum_{ i = 1}^n A_j \right)_n $ is equicontinuous on $L\left(E, \left| \cdot \right|_k \right)$. If $y \in E$, $m \in \N$,
    \begin{equation*}
    \left\| \sum_{ i = 1}^m A_i \left(\sum_{ j = 1}^n A_j \right)\left(y\right) \right\|_k \overset{ m \geq n }{ = } \left\| \sum_{ j = 1}^n A_j\left(y\right) \right\|_k \leq \left| y \right|_k,
    \end{equation*}
since
    \begin{eqnarray*}
    \left\| \sum_{ j = 1}^n A_j\left(y\right) \right\|_k & = & \sup_m \left\| \sum_{ i = 1}^m \left( \sum_{ j = 1}^n A_j \right)\left(y\right) \right\|_k = \nonumber \\
    & = & \sup_{ m \geq n } \left\| \sum_{ i = 1}^m A_i \left( \sum_{ j = 1}^m A_j \right)\left(y\right) \right\|_k + \sup_{ 1 \leq m < n} \left\| \sum_{ i = 1}^m A_i \left( \sum_{ j = 1}^m A_j \right)\left(y\right) \right\|_k \leq 2 \left| y \right|_k.
    \end{eqnarray*}
This implies that the extensions $\left(\sum_{ j =1}^n A_j^k \right)_n $ form also an equicontinuous set in $L\left(\widehat{\left(E, \left| \cdot \right|_k \right) }\right)$. Since $\sum_{ j = 1}^n A_j^k \to I$ pointwise in $E$ and $\left( \sum_{ j = 1}^n A_j^k \right)_n $ is equicontinuous in $L\left(\widehat{\left(E, \left| \cdot \right|_k \right) }\right)$ then, for every $ x \in \widehat{\left(E, \left| \cdot \right|_k \right) }$, $\sum_{ j = 1}^n A_j^k\widehat{x} \overset{ n }{ \to} \widehat{x}$ in $\widehat{\left(E, \left| \cdot \right|_k \right) }$.

We finally prove that the unique extension $\varphi_k : \widehat{ \left( E, \left| \cdot \right|_{k +1} \right) } \rightarrow \widehat{\left( E, \left| \cdot \right|_{ k}\right)  }$ of the identity $ \left(E, \left| \cdot \right|_{k +1} \right) \rightarrow \left(E, \left| \cdot \right|_k \right) $ is injective. Fix $ \widehat{y} \in \widehat{\left( E, \left| \cdot \right|_{ k +1} \right)}$ such that $\varphi_k \widehat{y} = 0$ in $\widehat{\left(E, \left| \cdot \right|_k \right)}$. We know $\widehat{y} = \sum_{ n = 1}^{\infty} A_n^{k +1} \widehat{y}$ the series converging in $\widehat{\left( E, \left| \cdot \right|_{ k +1} \right)}$ and the decomposition is unique, since $A_i^{k +1}A_j^{k +1} = \delta_{i, j} A_i^{k+1}$ if $i \neq j$. Moreover, $A_n^{ k +1}\left(\widehat{y}\right) \in E$, since $A_n\left(E\right)$ is finite dimensional in $E$, hence closed in $\widehat{\left( E, \left| \cdot \right|_{k +1} \right)}$.

Now $0 = \varphi_k\left(\widehat{y}\right) = \sum_{ n = 1}^{\infty} A_n^k\left(\varphi_k\left(\widehat{y}\right)\right)$, the series converging in $\widehat{\left(E, \left| \cdot \right|_k \right)}$. Since the decomposition is unique, we have $A_n^k\left(\varphi_k\left(\widehat{y}\right)\right) = 0$ for each $n \in \N$. We are done if we show that $A_n^{k +1}\widehat{y} = A_n^k\left(\varphi_k\left(\widehat{y}\right)\right)\left(=0\right)$, because this will imply $\widehat{y} = \sum_{ n = 1}^{\infty} A_n^{k+1} \widehat{y} = 0$.

To prove $A_n^{k+1} = A_n^k\left(\varphi_k\left(\widehat{y}\right)\right)$, select $\left(y_s\right) \subset E$ such that $y_s \to \widehat{y}$ in $\widehat{\left(E, \left| \cdot \right|_{k+1} \right)}$. Then $y_s$ converges to $\varphi_k\left(\widehat{y}\right)$ in $\widehat{\left(E, \left| \cdot \right|_k \right)}$. Now $A_n^k$ is the extension of $A_n$ and $A_n^{k+1}$ of $A_n$. Thus $A_n^{k+1}\left(\widehat{y}\right) = \lim_{ s \to \infty} A_n\left(y_s\right) = A_n^k\left(\varphi_k\left(\widehat{y}\right)\right)$.
\end{proof}

The advantage of the next characterization is that it is formulated in terms of an arbitrary fundamental sequence of seminorms.

\begin{thrm}
\textbf{(Dubinski, Vogt, 1985)} Let $E$ be a Fr\'echet space with a continuous norm. Let $\left( \left\| \cdot \right\|_k \right)$ be a increasing sequence of norms which define the topology of $E$. Denote $E_k = \widehat{\left(E, \left\| \cdot \right\|_k \right)} $ and $\varphi_k : E_{k + 1} \rightarrow E_k $ the unique extension of the identity $i: \left(E, \left\| \cdot \right\|_{k+1} \right) \rightarrow \left( E, \left\| \cdot \right\|_k \right)$. Then, the following are equivalent:
\begin{enumerate}
\item $E$ is countably normable
\item There exists $k_0 \in \N$ such that for every $k \geq k_0$ there exists $j > k$ such that if $\left(x_n\right)_n \subset E$ is $\left\| \cdot \right\|_j $-Cauchy and $\left\| x_n \right\|_{k_0} \to 0$, then $\left\|x_n\right\|_k \to 0$.
\end{enumerate}
\end{thrm}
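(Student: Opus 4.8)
The plan is to prove both implications with two tools already available: Proposition \ref{inj} (equivalently the Lemma just before the theorem), which turns injectivity of the extension of an inclusion into a statement about Cauchy sequences, and the reformulation in the Remark that $E$ is countably normable exactly when some fundamental sequence of norms $(|\cdot|_i)_i$ satisfies, for every $i$, that for some $i'>i$ the implication ``$(x_n)_n$ is $|\cdot|_{i'}$-Cauchy and $|x_n|_i\to 0$'' forces ``$|x_n|_{i'}\to 0$''; after passing to a subsequence this is the same as saying that all consecutive linking maps $\widehat{(E,|\cdot|_{i+1})}\to\widehat{(E,|\cdot|_i)}$ are injective. Throughout I write $\varphi_{k,k'}\colon E_{k'}\to E_k$ ($k\le k'$) for the unique extension of the identity, so that $\varphi_k=\varphi_{k,k+1}$ and $\varphi_{k_0,k'}=\varphi_{k_0,k}\circ\varphi_{k,k'}$.

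For $(1)\Rightarrow(2)$ I would fix a witnessing fundamental sequence of norms $(\|\cdot\|'_m)_m$ for which every extension $\widehat{(E,\|\cdot\|'_{m'})}\to\widehat{(E,\|\cdot\|'_m)}$ with $m'>m$ is injective; these are compositions of the injective consecutive maps, and the extension of a composition is the composition of the extensions. By the Lemma this means: if $(x_n)_n$ is $\|\cdot\|'_{m'}$-Cauchy with $\|x_n\|'_m\to 0$ then $\|x_n\|'_{m'}\to 0$. Since $(\|\cdot\|_k)$ and $(\|\cdot\|'_m)$ define the same topology, I choose $k_0$ so large that $\|\cdot\|'_1\le D\|\cdot\|_{k_0}$ for some $D>0$; this is the one place where the freedom of choosing $k_0$ is used. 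Given $k\ge k_0$, I pick $m$ with $\|\cdot\|_k\le C\|\cdot\|'_m$ and then $j>k$ with $\|\cdot\|'_m\le C'\|\cdot\|_j$. A sandwich argument then gives (2): an $(x_n)$ that is $\|\cdot\|_j$-Cauchy with $\|x_n\|_{k_0}\to 0$ is $\|\cdot\|'_m$-Cauchy with $\|x_n\|'_1\to 0$, hence $\|x_n\|'_m\to 0$ by injectivity of $\widehat{(E,\|\cdot\|'_m)}\to\widehat{(E,\|\cdot\|'_1)}$, hence $\|x_n\|_k\to 0$.

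For $(2)\Rightarrow(1)$ the idea is to pass to the ``reduced'' quotient spectrum relative to the fixed base level $k_0$. Set $N_k:=\ker\varphi_{k_0,k}\subseteq E_k$, a closed subspace. Since $\|\cdot\|_{k_0}$ is a norm on $E$, one has $N_k\cap E=\{0\}$, so $E$ embeds into the Banach space $E_k/N_k$, and I define $|x|_k:=\operatorname{dist}_{E_k}(x,N_k)$ for $x\in E$, whose completion is exactly $E_k/N_k$. The maps $\varphi_{k,k'}$ descend to $E_{k'}/N_{k'}\to E_k/N_k$ because $\varphi_{k,k'}(N_{k'})\subseteq N_k$, and these descended maps are automatically injective: $\varphi_{k,k'}(w)\in N_k$ gives $\varphi_{k_0,k'}(w)=\varphi_{k_0,k}\varphi_{k,k'}(w)=0$, i.e. $w\in N_{k'}$. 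Likewise $|x|_k\le|x|_{k+1}$, so $(|\cdot|_k)_{k\ge k_0}$ is increasing with injective consecutive linking maps. Thus injectivity of the new spectrum is free; the content is that $(|\cdot|_k)_{k\ge k_0}$ must still be a fundamental system for the topology of $E$.

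Proving fundamentality is where hypothesis (2) enters, and I expect this to be the main obstacle. Continuity is the easy bound $|\cdot|_k\le\|\cdot\|_k$. For the reverse I would read (2) as the kernel identity $N_{j(k)}=\ker\varphi_{k_0,j(k)}=\ker\varphi_{k,j(k)}$: the inclusion $\ker\varphi_{k,j(k)}\subseteq\ker\varphi_{k_0,j(k)}$ is automatic, and (2) supplies precisely the reverse inclusion. With this identification the induced injection $E_{j(k)}/N_{j(k)}\to E_k$ has norm $\le 1$, and evaluating it on the class of an arbitrary $x\in E$ gives $\|x\|_k\le|x|_{j(k)}$. Hence the new norms dominate the old ones cofinally, so $(|\cdot|_k)_{k\ge k_0}$ is a fundamental increasing sequence of norms defining the topology of $E$. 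As its completions have injective consecutive linking maps, $E$ is countably normable, which finishes $(2)\Rightarrow(1)$. The moral is that the quotient construction always yields injective links, and (2) is exactly the condition guaranteeing that this reduced spectrum still has $E$ as its projective limit.
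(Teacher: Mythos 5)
Your $(1)\Rightarrow(2)$ is the paper's own argument step for step: choose $k_0$ so that the first norm of the witnessing system is dominated by $\left\|\cdot\right\|_{k_0}$, interlace the two systems of norms, and apply the Lemma to the composite $\varphi_1\circ\cdots\circ\varphi_{k'-1}$ of the injective consecutive extensions.

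Your $(2)\Rightarrow(1)$ is correct but genuinely different from, and more economical than, the paper's. The paper builds its countably normed model outside $E$: it sets $F_k=\varphi_1\cdots\varphi_k\left(E_{k+1}\right)$ with the quotient norm, forms $F=\cap_k F_k$, proves a separate Claim producing a compatible thread $\left(x_k\right)_k$ with $x_k=\varphi_k x_{k+1}$ for each point of $\cap_k F_k$, lifts that thread back to $E$ via the projective-limit representation $E=\mathrm{proj}_k\left(E_k,\varphi_k\right)$ to get $P_1E=F$, and finally invokes the closed graph and open mapping theorems to turn the bijection $P_1\colon E\to F$ into an isomorphism. You never leave $E$: the norms $|x|_k=\mathrm{dist}_{E_k}\left(x,N_k\right)$ with $N_k=\ker\varphi_{k_0,k}$ live on $E$ itself, injectivity of the induced links $E_{k+1}/N_{k+1}\to E_k/N_k$ is automatic from the quotient construction, and hypothesis $(2)$ enters exactly once, as the kernel identity $\ker\varphi_{k_0,j(k)}=\ker\varphi_{k,j(k)}$, which yields $\left\|x\right\|_k\le|x|_{j(k)}$; combined with the trivial $|x|_k\le\left\|x\right\|_k$ this shows the new system is fundamental, so no closed-graph or projective-limit argument is needed at all. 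The two proofs are not unrelated: your Banach quotient $E_{k+1}/N_{k+1}$ is isometric to the paper's $F_k$, and your kernel identity is precisely the paper's assertion that $\varphi_{k_0}\cdots\varphi_{k-1}$ is injective on $\varphi_k\cdots\varphi_{j_k-1}\left(E_{j_k}\right)$. One point of polish: you assert rather than prove that $(2)$ supplies the inclusion $\ker\varphi_{k_0,j(k)}\subseteq\ker\varphi_{k,j(k)}$; it does need the short density argument the paper gives (approximate $z\in\ker\varphi_{k_0,j(k)}$ by a $\left\|\cdot\right\|_{j(k)}$-Cauchy sequence $\left(z_n\right)\subset E$, note $\left\|z_n\right\|_{k_0}\to 0$, apply $(2)$ to get $\left\|z_n\right\|_k\to 0$, and use that limits in $E_k$ are unique), so those three lines should be added. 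What your route buys is the elimination of all the abstract machinery in the paper's second half; what the paper's route buys is an explicit realization of $E$ as an intersection of Banach subspaces of $E_1$, matching the slogan that countably normable means an intersection of Banach spaces.
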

\begin{proof}
In order to show $\left(1\right) \Rightarrow \left(2\right)$, let $\left( \left| \cdot \right|_k \right)_{k \in \N} $ be a fundamental sequence of norms in $E$ such that the extensions $\varphi_k : \widehat{\left(E, \left| \cdot \right|_{k+1} \right)} \rightarrow \widehat{\left(E, \left| \cdot \right|_k \right)} $ of the identity $i: \left(E, \left| \cdot \right|_{k+1} \right) \rightarrow \left( E, \left| \cdot \right|_k \right)$ are injective for each $k$.

Select $k_0 \in \N$ such that $\left| \cdot \right|_1 \leq C\left\| \cdot \right\|_{k_0} $ for some $C > 0$ (recall that both $\left( \left| \cdot \right|_k \right)$ and $\left( \left\| \cdot \right\|_k \right)_k $ are fundamental systems of seminorms of $E$). Fix $k \geq k_0$ and choose $k'$ such that $\left\| \cdot \right\|_k \leq D\left| \cdot \right|_{k'}$ for some $D > 0$. Now choose $j$ such that $\left| \cdot \right|_{k'} \leq E\left\| \cdot \right\|_j $.

Take $\left( x_n \right)_n \subset E$, which is $\left\| \cdot \right\|_j$-Cauchy and satisfies $\left\| x_n \right\|_{k_0} \to 0 $ as $n$ tends to infinity. Since $\left| \cdot \right|_1 \leq C \left\| \cdot \right\|_{k_0}$, we get $\left| x_n \right|_1 \to 0 $ as $n \to \infty$. Moreover $\left(x_n\right)_n$ is $\left| \cdot \right|_{k'}$-Cauchy in $E$ and the unique extension $\varphi_1 \circ \ldots \circ \varphi_{ k' - 1} : \widehat{\left( E, \left| \cdot \right|_{k'} \right)} \rightarrow \widehat{\left( E, \left| \cdot \right|_1 \right)}$ of the identity $i: \left(E, \left\| \cdot \right\|_{k'} \right) \rightarrow \left( E, \left\| \cdot \right\|_1 \right)$ is injective. By the lemma,  $\left| x_n \right|_{k'} \to 0 $ as $n \to \infty$. Since $\left\| \cdot \right\|_k \leq D\left| \cdot \right|_{k'}$, we conclude $\left\| x_n \right\|_k \overset{ n }{\to} 0$. Then, the proof of $\left(2\right)$ is complete.

Now, in order to show $\left(2\right) \Rightarrow \left(1\right)$ we first prove the following
   \begin{claim}
        Condition  $\left(2\right)$ implies that there exists $k_0 \in \N$ such that, for every element $x \in \cap_{ k = k_0 }^{\infty} \varphi_{k_0} \ldots \varphi_{k}\left(E_{k+1}\right)$, there exist $x_k  \in E_k, k \in \N,$ such that $x_{k_0} = x$ and $ x_k = \varphi_k x_{k+1} $ for every $k \geq k_0$ (i.e. $x$ belongs to $Proj_{ k \geq k_0}\left(\left(E_k\right)_k, \varphi_k: E_{k+1} \to E_k\right)$).
   \end{claim}

\begin{proof}
For each $k \geq k_0$, we choose $j_k > k$ satisfying $\left(2\right)$, we may select it satisfying $j_{k+1} > j_k$ for each $k \in \N$. Given $x \in \cap_{ k = k_0 }^{\infty} \varphi_{k_0} \ldots \varphi_{k}\left(E_{k+1}\right)$, we can find for each $k > k_0$, $y_k \in E_k$ such that $x = \varphi_{k_0} \ldots \varphi_{k-1} y_k$. By $\left(2\right)$, $\varphi_{k_0} \ldots \varphi_{k-1}$ is injective on $\varphi_k \ldots \varphi_{j_k -1 }\left(E_{j_k}\right)$. Indeed, $\varphi_{k_0} \ldots \varphi_{k-1}: E_k \rightarrow E_{k_0}$ and $\varphi_k \ldots \varphi_{ j_k -1 }: E_{j_k} \rightarrow E_k$ and $\left(\varphi_{k_0} \ldots \varphi_{k-1}\right)\left(\varphi_k \ldots \varphi_{ j_k -1 }\right): E_j \rightarrow E_{k_0}$ is the unique continuous extension of the identity $i: \left(E, \left\| \cdot \right\|_{j_k} \right) \rightarrow \left( E, \left\| \cdot \right\|_{k_0} \right)$. By $\left(2\right)$ if $\left(x_n\right)_n$ is $\left\|\cdot\right\|_{j_k}$-Cauchy and $\left\|x_n\right\|_{k_0} \to 0$, therefore $\left\| x_n \right\|_k \to 0$.

Suppose $\left(\varphi_{k_0} \ldots \varphi_{k-1}\right)\left(\varphi_k \ldots \varphi_{ j_k -1 }\right)\left(z\right) = 0$ with $z \in E_{j_k}$. Select $\left(z_n \right)_n \subset E$, which is $\left\| \cdot \right\|_{j_k}$-Cauchy with $z_n \to z$ in $E_{j_k}$. We obtain
    \begin{equation*}
    \left(\varphi_{k_0} \ldots \varphi_{k-1}\right)\left(\varphi_k \ldots \varphi_{ j_k -1 }\right)\left(z_n\right) = z_n \to \left(\varphi_{k_0} \ldots \varphi_{k-1}\right)\left(\varphi_k \ldots \varphi_{ j_k -1 }\right)\left(z\right) = 0 \textrm{ in } E_{k_0},
    \end{equation*}
then $\left\| z_n \right\|_{k_0} \overset{n}{\to} 0$; therefore, $\left\| z_n \right\|_{k} \overset{n}{\to} 0$.

On the other hand, as $z_n \overset{n}{\to} z$ in $E_{j_k}$ then $z_n = \left(\varphi_k \ldots \varphi_{ j_k -1 }\right)\left(z_n\right) = \overset{n}{\to} \left(\varphi_k \ldots \varphi_{ j_k -1 }\right)\left(z\right)$ in $E_k$. Therefore $z_n \overset{k}{\to} \varphi_k \ldots \varphi_{ j_k -1 }\left(z\right)$ in $E_k$ but $\left\|z_n\right\|_k \overset{n}{\to} 0$ and this implies $\left(\varphi_k \ldots \varphi_{ j_k -1 }\right)\left(z\right) = 0$.

We define now $x_{k_0} := x$ and $x_k := \varphi_k \ldots \varphi_{ j_k -1 }\left(y_{j_k}\right)$ for each $k \in \N$. Observe first that the injectivity of $\varphi_{k_0} \ldots \varphi_{ k -1 } $ on $ \varphi_k \ldots \varphi_{ j_k -1 }\left(E_{j_k}\right)$ implies $\varphi_k \ldots \varphi_{ j_k -1 }\left(y_{j_k}\right) = \varphi_k \ldots \varphi_{ j -1 }\left(y_j\right)$ for all $j \geq j_k$. Both belong to $\varphi_k \ldots \varphi_{ j_k -1 }\left(E_{j_k}\right) \subset E_k $ and both are mapped to $x$ by $\varphi_{k_0} \ldots \varphi_{k -1}$. In particular, for $j = j_{k +1} $ we get $\varphi_k \ldots \varphi_{ j_k -1 }\left(y_{j_k}\right) = \varphi_k \ldots \varphi_{ j_k }\left(y_{j_{k+1}}\right)$. Now $x_{k + 1} = \varphi_{k+1} \ldots \varphi_{ j_k  }\left(y_{j_{k+1}}\right)$, thus $\varphi_k x_{k+1} = \varphi_k \varphi_{k+1} \ldots \varphi_{ j_k  }\left(y_{j_{k+1}}\right) = \varphi_{k} \ldots \varphi_{ j_k - 1 }\left(y_{j_{k}}\right) = x_k$. And $x_{k_0+1} = \varphi_{k_0 +1} \ldots \varphi_{ j_{k_0 + 1} - 1 }\left(y_{j_{k_0+1}}\right)$ and $\varphi_{k_0} x_{k_{0} + 1} = \varphi_{k_0} \varphi_{k_0+1} \ldots \varphi_{ j_{k_0 +1}  }\left(y_{j_{k_0+1}}\right) = x = x_{k_0}$. And the claim is proved.
\end{proof}

We may assume that $k_0 = 1$ in the claim. So, for every $x \in \cap_{k = 1}^{\infty} \varphi_1 \ldots \varphi_k \left(E_{k+1}\right)$, there exists $\left(x_k\right)_k$ such that $x_k \in E_k$, with $x_1 = x$ and $x_k = \varphi_k x_{k+1}$ for each $k \geq 1$. Set $F_k := \varphi_1 \ldots \varphi_{ k }\left(E_{k+1}\right)$ with the quotient norm induced by $E_{k+1}$. The space $F = \cap_k F_k$ with the projective topology is a countably normed Fr\'echet space. Observe that $F \subset E_1 $ algebraically and the injection is continuous, since each map $\varphi_1 \ldots \varphi_k: E_{k+1} \rightarrow E_1$ is continuous.

We denote by $P_k : E \rightarrow E_k$ the canonical inclusion. Recall that $\left\| \cdot \right\|_1$ is a norm, hence $P_k : E \rightarrow \widehat{\left(E, \left\| \cdot \right\|_k \right)}$ is injective. We show that $P_1 E = F$ ( in $E_1$). By definition of $E$, $P_1 = \varphi_1 \ldots \varphi_k P_{k+1} $ for each $k \in \N$ then $P_1 E = \varphi_1 \ldots \varphi_k P_{k +1} E \subset \varphi_1 \ldots \varphi_k\left(E_{k+1}\right)$ for each $k$; therefore, $P_1 E = F$. On the other hand, if $y \in F \subset E_1$, $y = \cap_{k = 1}^{\infty} \varphi_1 \ldots \varphi_k\left(E_{k+1}\right)$ we apply the claim to find $\left(x_k\right)_k$, $x_k \in E_k$ for each $k$ such that $x_1 = y$ and $\varphi_k x_{k+1} = x_k$ for each $k \in \N$.  Since $E$ is a Fr\'echet space and $E = proj_k \left(E_k, \varphi_k\right)$, there is $x \in E$ with $P_k \left(x\right) = x_k$ for each $k$. In particular $P_1\left(x\right) = y$ and $F \subset P_1 E$.  Thus $P_1: E \rightarrow F \subset E_1 $ is bijective. We know that $P_1: E \rightarrow E_1 $ is continuous and the inclusion $F \subset E_1 $ is also continuous. If we prove that $P_1$ has closed graph as a map from $E$ to $F$, the closed graph theorem implies that $P_1$ is a continuous and (being bijective) by the open graph theorem an isomorphism.  Suppose $x_n \to x$ in $E$ and $P_1x_n \to y$ in $F$, then $P_1 x_n \to y$ in $E_1$ (since $F \hookrightarrow E_1$ is continuous) and $P_1 x_n \to Px$ in $E_1$ (since $P_1: E \hookrightarrow E_1 $ is continuous), therefore $E_1$ is Banach/Hausdorff and then $Px = y$.
\end{proof}

\begin{con}
Vogt's Example \ref{vogtex} is not countably normable.
\end{con}

%\bibliographystyle{amsalpha}
%\bibliography{biblio}

\vspace{.6cm}

\textbf{Author's address:} \\

Jos\'e Bonet  \\

Instituto Universitario de Matem\'atica Pura y Aplicada \\

Universitat Polit\`ecnica de Val\`encia \\

E-46071 Valencia, SPAIN \\

email: jbonet@mat.upv.es

\end{document}